\newtheorem{theorem}{Theorem}
\newtheorem{lemma}{Lemma}
\newtheorem{definition}{Definition}
\newtheorem{assumption}{Assumption}
\begin{document}

\title[Solving inverse problem with BIAN]{BIAN: A Deep Learning Method to Solve Inverse Problems Using Only Boundary Information}

\author{Feng Chen\footnote{email: chenf1896@outlook.com}, Kegan Li, Yiran Meng, Zhiyi Xiao and Pengqi Wu}

\address{School of Electircal Engineering, Xi’an Jiaotong University, Xi’an 710049, China}
\ead{lkg818@stu.xjtu.edu.cn}

\vspace{10pt}
\begin{indented}
\item[]September 2024
\end{indented}

\begin{abstract}
Over the past years, inverse problems in partial differential equations have garnered increasing interest among scientists and engineers. However, due to the lack of conventional stability, nonlinearity and non-convexity, these problems are quite challenging and difficult to solve. In this work, we propose a new kind of neural network to solve the coefficient identification problems with only the boundary information. In this work, three networks has been utilized as an approximator, a generator and a discriminator, respectively. This method is particularly useful in scenarios where the coefficients of interest have a complicated structure or are difficult to represent with traditional models. Comparative analysis against traditional coefficient estimation techniques demonstrates the superiority of our approach, not only handling high-dimensional data and complex coefficient distributions adeptly by incorporating neural networks but also eliminating the necessity for extensive internal information due to the relationship between the energy distribution within the domain to the energy flux on the boundary. Several numerical examples have been presented to substantiate the merits of this algorithm including solving the Poisson equation and Helmholtz equation with spatially varying and piecewise uniform medium.

\end{abstract}
\submitto{\IP}
\vspace{2pc}
\noindent{\it Keywords}: inverse problem, neural network, collaborative neural network, convergence analysis
\maketitle

\section{Introduction}

In mathematics, science, and engineering, a forward problem involves predicting or calculating the behavior or response of a system under specific conditions based on known system coefficients or input conditions. Forward problems are typically characterized by well-defined coefficients and conditions, allowing for precise mathematical representations and solutions. Due to their predictability and stability, accurate modeling of physical problems is achievable. With accurate models, precise solutions to forward problems can be obtained through model-driven approaches. These approaches rely heavily on established theoretical models that encapsulate the governing laws and principles relevant to the system under study. By implementing these models, scientists and engineers can simulate scenarios and predict outcomes with high accuracy. Model-driven solutions are particularly effective when the system dynamics are well-understood and the governing equations are well-defined, allowing for deterministic predictions and controlled manipulations. This approach is extensively used in fields such as physics, engineering, and economics, where reliable models form the basis for analysis, design, and optimization tasks.

Contrasting with forward problems, inverse problems specifically refer to deducing the causes or properties of a system from observed outcomes\cite{asterParameterEstimationInverse2005}. Due to insufficient or ambiguous information about the physical system, establishing accurate physical models for inverse problems presents formidable challenges. Consequently, applying model-driven methods to solve inverse problems proves ineffective. Inverse problems often lack direct and clear relationships between inputs and outputs, making the standard approach of using well-defined physical laws and deterministic models less applicable. Thus, alternative approaches are needed to address the indeterminate nature of inverse problems. Researchers resort to data-driven methods to tackle inverse problems\cite{arridgeSolvingInverseProblems2019}, which significantly differ from the straightforward methodologies used in solving forward problems. The ill-posed nature of inverse problems makes resolving them with data-driven methods particularly challenging.

Inverse problems have wide-ranging applications across various fields, demonstrating their significant research value. In medical imaging, for example, techniques such as computed tomography (CT), magnetic resonance imaging (MRI)\cite{Adler_2017}, and ultrasound imaging rely on solving inverse problems to reconstruct images of the interior of the body from external measurements. In geophysical exploration, inverse problems are employed to infer the Earth's subsurface properties from surface measurements, aiding in the search for natural resources such as oil, gas, and minerals\cite{reichsteinDeepLearningProcess2019}. In financial market analysis, inverse problems help in modeling and predicting market behavior based on observed economic indicators\cite{OTA2023100353}. Other applications include nondestructive testing, where inverse problems are utilized to detect flaws in materials and structures, and environmental science, where they assist in understanding and predicting environmental changes based on observed data. These diverse applications highlight the critical role of inverse problems in advancing technology and scientific understanding.

Leveraging artificial intelligence (AI) to solve inverse problems offers significant advantages, especially compared to traditional numerical algorithms such as finite element methods (FEM), finite difference methods (FDM) and boundary element methods (BEM). Traditional numerical methods primarily address well-posed boundary value problems, relying on precise descriptions of boundary conditions and model coefficients\cite{evans2022partial}. However, when data is introduced, these methods often transform the original problem into an overdetermined system and lack interfaces to flexibly incorporate data during the solving process. This limitation makes traditional numerical methods less effective in handling complex and incomplete data.

In contrast, AI demonstrates considerable flexibility and effectiveness in solving inverse problems. AI can seamlessly integrate data-driven and model-driven approaches by incorporating numerical information into objective functions, constraints, and optimization algorithms. Techniques such as deep learning and reinforcement learning enable AI to handle large and complex datasets. Even in the absence of complete models, AI can effectively infer system coefficients or input conditions by learning patterns and relationships within the data. This hybrid approach leverages both data and models, providing robust solutions to inverse problems that are otherwise challenging for traditional methods.

Currently, research on utilizing artificial intelligence (AI) to solve inverse problems focuses on three main methods\cite{Nganyu}:

\begin{itemize}
      \item {\bf Physics-Informed Neural Networks (PINNs).} 
      The approach to solving inverse problems using PINNs involves directly incorporating physical laws into the loss function of the neural network\cite{raissiPhysicsinformedNeuralNetworks2019}. This integration allows the network to learn the solution that not only fits the observed data but also satisfies the underlying physical principles. By minimizing this physics-informed loss function, PINNs can accurately approximate the system's unknown coefficients or inputs, combining the strengths of data-driven approaches with the rigor of model-driven methods. This approach ensures that the solution process is consistent with the actual physical phenomena, providing robust and reliable results even in the presence of incomplete or noisy data.
      \item {\bf Deep Ritz Method (DRM).} 
      The Deep Ritz Method (DRM) addresses inverse problems by leveraging variational principles, where the solution to a partial differential equation (PDE) is obtained by minimizing an energy functional\cite{eDeepRitzMethod2018}. By training the neural network to minimize the energy functional, DRM effectively approximates the solution to the PDE, capturing the underlying physical phenomena. This method combines the strengths of neural networks in handling complex data with the rigor of variational principles, providing a robust framework for solving inverse problems.
      \item {\bf Weak Adversarial Networks (WANs).} 
      WANs solve inverse problems by employing the weak form of PDEs and training neural networks through adversarial learning\cite{ZANG2020109409}. In this approach, the neural network approximates the solution of the PDE by minimizing a loss function that incorporates both the weak formulation of the PDE and adversarial training techniques. This method allows the network to learn the underlying physical laws from observed data while effectively handling noise and incomplete information, providing a robust and flexible framework for solving complex inverse problems.
\end{itemize}

However, these methods typically require substantial internal measurement data to ensure that both the unknown physical information and the unknown field variables satisfy the PDEs within the solution domain. For entities that are difficult to penetrate, obtaining such internal data is often challenging. This limitation restricts the effectiveness and general applicability of AI methods in certain practical applications.

To further enhance the applicability of AI methods in solving inverse problems, we propose a novel neural network algorithm that relies solely on boundary information: Boundary-Informed Alone Neural Network (BIAN). The core idea of this approach is to transform the missing physical information in the governing equations into unknown equivalent excitation sources. By leveraging Green's theorem of energy conservation, an equation relating boundary energy flux to the energy distribution within the solution domain can be established\cite{10.1093/teamat/hrl017}. This method utilizes boundary information to predict the distribution of the equivalent excitation sources, thereby determining the unknown physical information and field distributions. The advantage of this approach is that it does not require any internal measurement data; it relies solely on boundary information to accurately predict and efficiently compute the unknown physical properties and field quantities. BIAN not only reduces the difficulty of data acquisition but also improves the feasibility of solving inverse problems, making it suitable for practical physical and engineering applications.

To enhance the robustness and accuracy of this algorithm, we propose a multi-neural network collaborative optimization model. This model includes three types of neural networks working together: a fully connected neural network (FCNN) as the approximator, another fully connected neural network as the generator, and a single-layer neural network as the discriminator. The FCNN approximates the missing physical information in the physical equations, the generator predicts the solution equation in the equations, and the discriminator evaluates the accuracy of the samples generated by the approximator on the boundary and inside the domain. Because the predictions of the generator and the discriminator can be expressed in closed form, the training process of these multiple neural networks allows for the exchange of information. This results in a collaborative rather than adversarial relationship among the networks, facilitating more efficient and accurate predictions.

An overview of this paper is as follows. \Sref{section2} is dedicated to our proposed approach for learning a neural network for the indeterminate coefficients. We describe topics including the mathematical framework, the structure of the network and the optimization methodology. In \Sref{section3}, we focus on the numerical examples that illustrate the benefits and potential of our approach in inverse problems with complex medium distributions. Finally, the conclusion and future work are provided in \Sref{section4}.

\section{The Algorithm}
\label{section2}

In this section, we delve into the mathematical underpinnings of the Boundary-Informed Alone Neural Network. This exploration encompasses three main components: the mathematical model of coefficient identification inverse problems, the mathematical principles underlying BIAN, and the network architecture utilized in this method.

\subsection{Mathematical Model of coefficient Identification Problem}

Firstly, we introduce the mathematical model for the coefficient identification inverse problems. These problems involve determining unknown coefficients of a system based on given system responses or observed data. Mathematically, such problems can be expressed as follows:
\begin{eqnarray}
\label{eq2_1}
           \mathcal{F}(u,\lambda) =f 
\end{eqnarray}
where $\mathcal{F}$ is the forward operator that describes the system's behavior. $u$ represents the state variable of the system, such as temperature, displacement, or electric potential, while $\lambda$ denotes the coefficients to be identified, such as material properties or source terms. The term $f$ represents the known system response or output. The goal of coefficient identification is to find the best estimate $\hat{\lambda}$ that minimizes the discrepancy between $\mathcal{F}(u,\hat{\lambda})$ and $f$.

However, we may address the following challenges when solving such coefficient identification inverse problems\cite{kirschIntroductionMathematicalTheory2021}:

\begin{itemize}
      \item {\bf Ill-posedness.} 
      One of the primary challenges in coefficient identification inverse problems is their ill-posed nature. According to Hadamard's definition, a problem is well-posed if a solution exists, the solution is unique, and it depends continuously on the input data. In contrast, ill-posed problems may not satisfy one or more of these criteria. For instance, there may be no solution, multiple solutions, or solutions that are highly sensitive to small perturbations in the input data. This sensitivity to measurement noise and errors can lead to significant inaccuracies in the identified coefficients, making it difficult to obtain reliable and stable solutions.
      \item {\bf Nonlinearity.}
      Another challenge is the nonlinearity inherent in the relationship between the state variable $u$, the coefficients $\lambda$, and the system response $f$. In many physical and engineering systems, the governing equations are nonlinear differential equations. This nonlinearity complicates the identification process because the solution space is not straightforward and may contain multiple local minima. Nonlinear optimization techniques are required to navigate this complex landscape, which can be computationally intensive and require sophisticated algorithms to ensure convergence to the global optimum.
      \item {\bf High Dimensionality.}
      coefficient identification problems can also involve a large number of coefficients, leading to high-dimensional optimization problems. High dimensionality increases the complexity of the problem significantly. The computational cost grows exponentially with the number of coefficients, a phenomenon known as the "curse of dimensionality." Moreover, high-dimensional spaces can be sparsely populated with data points, making it difficult to build accurate models and increasing the risk of overfitting. Efficient algorithms and dimensionality reduction techniques are essential to manage this complexity and make the optimization problem tractable.
\end{itemize}

\subsection{Mathematical Principles underlying BIAN}
In this section, we present the mathematical principles underlying our proposed method, BIAN. The core concept of BIAN is to leverage boundary information to accurately predict the internal distribution of physical properties and field quantities, without relying on internal measurement data. This is achieved by transforming the governing equations of the system into a form that can be solved using boundary data alone.

Firstly, we introduce how we establish the relationship between the energy distribution within the domain to the energy flux on the boundary, which eliminates the need for internal information for solving the PDEs. Contemplate a two-dimensional potential issue characterized by unknown medium distribution, structured as follows:
\numparts
\begin{eqnarray}
     \mathcal{L} u(x) = f(x), \quad x\in\Omega \label{eq2_2_1}\\
     u(x)=\bar{u}(x),\quad x\in\Gamma_1 \label{eq2_2_2}\\
     q(x)=\frac{\partial u(x)}{\partial n}=\bar{q}(x),\quad x\in \Gamma_2 \label{eq2_2_3}
\end{eqnarray}
\endnumparts
In \Eref{eq2_2_1}, $\mathcal{L}$ is the differential operator which has an indeterminate coefficient, $f(x)$ is the source element and $\Gamma_1+\Gamma_2=\partial \Omega$ is the boundary of the computational domain $\Omega$. \Eref{eq2_2_2} and \Eref{eq2_2_3} denote the essential boundary condition and the natural boundary condition for the potential issue. 

In this paper, we mainly force on the Poisson equations, i.e. $\mathcal{L}=-\nabla \cdot (\nabla \varepsilon(x))$, where $\varepsilon(x)$ represents the indeterminate medium distribution. The first step of this method involves transforming the partial differential equation with the form of \Eref{eq2_2_1} which contains indeterminate medium distributions into a Poisson equation in which all the indeterminate medium distributions are represented as the excitation source term. Upon elaborating on \Eref{eq2_2_1}, we obtain the following result:
\begin{eqnarray}
\label{eq2_3}
      -\varepsilon(x) \nabla^2 u(x) - \nabla \varepsilon(x) \nabla u(x) = f(x)
\end{eqnarray}
By rearranging the terms with indeterminate coefficients on the right side of the equation, we yield the function in the following form:
\begin{eqnarray}
\label{eq2_4}
      -\nabla^2 u(x) = g(x)
\end{eqnarray}
Here $g(x) = (f(x)+\nabla \varepsilon(x) \nabla u(x))/\varepsilon(x)$ donates the equivalent source term. The initial potential problem characterized by indeterminate medium distribution has been reformulated to encompass a potential problem with an unknown excitation source term. 

Then, we utilize the principle of energy conservation to establish the equation relating the energy distribution within the domain to the energy flux on the boundary. Prior to that, we introduce Green's function of PDEs, which plays the role of a powerful mathematical tool in solving PDEs. The Green's function, denoted as $G(x,x')$, represents the response of a system at a point $x$ due to a unit impulse source applied at another point $x'$. Mathematically, the Green's function satisfies the following fundamental equation:
\begin{eqnarray}
\label{eq2_5}
      \nabla^2 G(x,x') + \delta (x-x') = 0
\end{eqnarray}
where $\delta (x-x')$ is the Dirac delta function and $\nabla^2$ is the operator of the system. 

The Green's function has extensive applications in physics and engineering. For instance, in electromagnetics, it is used to determine the distribution of electric and magnetic fields; in acoustics, it helps analyze the propagation of sound waves; and in quantum mechanics, the Green's function is employed to study particle propagation and interactions.

In order to get the relationship between the energy distribution within the domain to the energy flux on the boundary, we propose the weighted residual approach by introducing the Green's function as the weight function. The weight residual function is represented as:
\begin{eqnarray}
\label{eq2_6}
      \int_{\Omega} G(x,x')(\nabla^2u(x)+g(x))\mathrm{d}\Omega=0
\end{eqnarray}
With integration by parts, we can get:
\begin{eqnarray}
\label{eq2_7}
      \int_{\Omega} G(x,x')(\nabla^2u(x)+g(x))\mathrm{d}\Omega=  \int_{\Omega}\frac{\partial}{\partial x_{i}}\Big(G(x,x') \frac{\partial u(x)}{\partial x_{i}}\Big)\mathrm{d}\Omega \nonumber \\-\int_{\Omega}\frac{\partial u(x)}{\partial x_{i}}\frac{\partial G(x,x')}{\partial x_{i}}\mathrm{d}\Omega + \int_{\Omega} G(x,x')g(x)\mathrm{d}\Omega
\end{eqnarray}
Performing integration by parts again on the second integral term on the right-hand-side of \Eref{eq2_7}, we obtain:
\begin{eqnarray}
\label{eq2_8}
      \int_{\Omega}\frac{\partial u(x)}{\partial x_{i}}\frac{\partial G(x,x')}{\partial x_{i}}\mathrm{d}\Omega \nonumber \\ = \int_{\Omega}\frac{\partial}{\partial x_{i}}\Big( u(x)\frac{\partial G(x,x')}{\partial x_{i}}\Big)\mathrm{d}\Omega-\int_{\Omega} u(x)\nabla^2G(x,x')\mathrm{d}\Omega 
\end{eqnarray}

Gauss's theorem, with the form of \Eref{eq2_9}, has been applied to establish the equation relating the energy distribution within the domain to the energy flux on the boundary. 
\begin{eqnarray}
\label{eq2_9}
      \int_{\Omega}(\nabla F)\mathrm{d}\Omega = \oint _{\partial \Omega}F\cdot \mathrm{d}S
\end{eqnarray}
The relationship between the two kinds of energy can be obtained as:
\begin{eqnarray}
\label{eq2_10}
      c(P)u(P)+\int_{\Gamma}u(Q)\frac{\partial u^{*}(Q,P)}{\partial\boldsymbol{n}}\mathrm{d}\Gamma(Q)= \nonumber \\ \int_{\Gamma}u^{*}(Q,P)\frac{\partial u(Q)}{\partial\boldsymbol{n}}\mathrm{d}\Gamma(Q)+\int_{\Omega}u^{*}(q,P)g(q)\mathrm{d}\Omega(q)
\end{eqnarray}
Here $Q \in \partial \Omega$ and $q\in \Omega$ are the field points on the boundary and inside the domain, respectively. $P \in \partial \Omega$ represents the boundary point of the domain acting as the source point. $c(P)$ is dictated by the location of point $P$. Specifically, $c(P) = 1 - a/2\pi$ when point $P$ is positioned on the boundary of the domain, where $a$ is the angle constituted by the tangent plane at the boundary point $P$.  

When the source point $p$ is located inside the domain $\Omega$, according to the properties of the Dirac delta function, we can get the relationship between the internal points and the boundary condition with the form:
\begin{eqnarray}
\label{eq2_11}
      u(p)+\int_{\Gamma}u(Q)\frac{\partial u^{*}(Q,p)}{\partial\boldsymbol{n}}\mathrm{d}\Gamma(Q)= \nonumber \\ \int_{\Gamma}u^{*}(Q,p)\frac{\partial u(Q)}{\partial\boldsymbol{n}}\mathrm{d}\Gamma(Q)+\int_{\Omega}u^{*}(q,p)g(q)\mathrm{d}\Omega(q) 
\end{eqnarray}
Here $p\in \Omega$ represents the internal points, acting as the source points.

Through the above method, we represent the energy distribution within the solution domain using the boundary energy flux, resulting in an energy equation that relies solely on boundary information. Next, we introduce how to use neural networks to approximate the medium parameters and potential functions.

The method of utilizing multiple neural networks in combination to enhance solution accuracy and speed has seen extensive development, including techniques such as Generative Adversarial Networks (GANs)\cite{goodfellow2014generativeadversarialnetworks}. However, these methods typically involve an adversarial relationship between the neural networks. In contrast, BIAN employs a collaborative training approach for neural networks. Collaborative training involves the simultaneous training of two or more neural networks, allowing them to interact and learn from each other during the training process. This interaction through shared information and gradients can lead to better performance compared to training a single network independently.

In this work, we propose a collaborative optimization model involving three neural networks to address the coefficient identification problems. These networks work synergistically, sharing information to enhance solution accuracy and computational efficiency. This collaborative approach provides a robust framework for solving complex physical equations. We will propose each of the three neural networks individually, detailing the structures, the roles they play and the mathematical expressions.

The first neural network plays the role of an approximator to estimate the indeterminate medium distributions of the physical equation. In the architecture that we use, apart from the input and output layers, two residual blocks are used consisting of two linear transformations, two activation functions and a residual connection, both the input $x_{i-1}$ and the output $x_i$ of the residual block are vectors in $\mathbb{R}^m$. The residual block of the neural network can be expressed as:
\begin{eqnarray}
\label{eq2_12}
      x_{i} = f^{i-1}_{\vartheta}(x_{i-1};\vartheta ) = \sigma (W_{i-1}^2 \cdot \sigma (W_{i-1}^1 x_{i-1}+b_{i-1}^1) + b_{i-1}^2) + x_{i-1}
\end{eqnarray}
Where $W_{i-1}^1, W_{i-1}^2 \in \mathbb{R}^{m\times m}$, $b_{i-1}^1, b_{i-1}^2 \in \mathbb{R}^{m}$ are parameters associated with the block. $\sigma$ is the activation function. To balance the simplicity and accuracy, we decide to use the activation function with the form:
\begin{eqnarray}
\label{eq2_13}
      \sigma(x) = max\{ x^3,0\} 
\end{eqnarray}

The last term of \Eref{eq2_12}, the residual connection, is a mature architecture in deep learning to enhance the performance of the model. The importation of the residual connection helps to address the vanishing gradient problem and the degradation problem.\cite{Hu_2024} The structure of this neural network is shown as \Fref{fig2_1}:
\begin{figure}[H]
      \centering
      \includegraphics[scale=0.7]{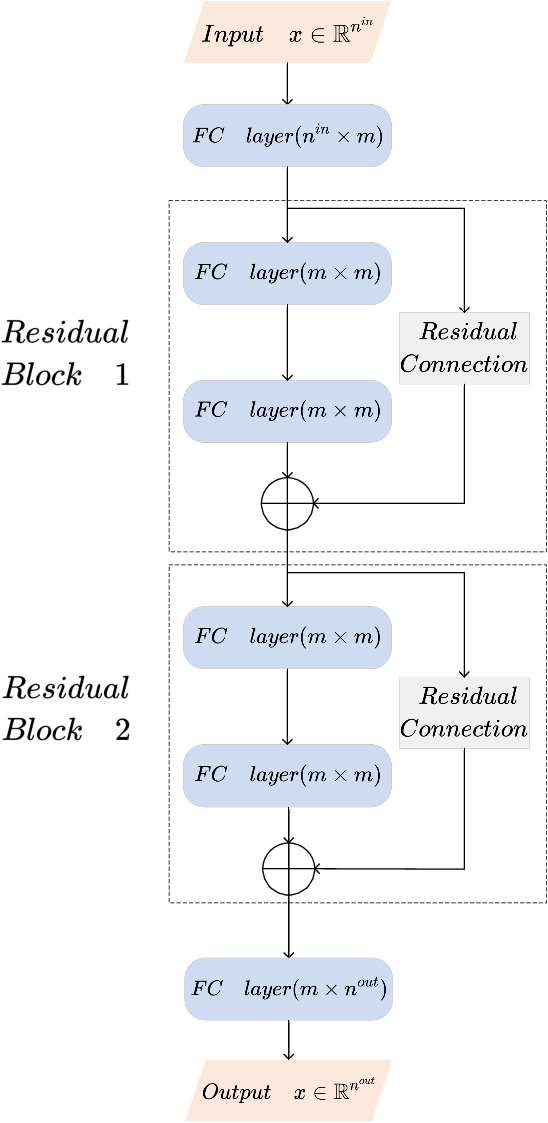}
      \caption{The structure of the neural network used in this work. Two residual blocks and two extra fully connected layers are employed.}
      \label{fig2_1}
\end{figure}

The neural network can be expressed as:
\begin{eqnarray}
\label{eq2_14}
      u_1(x;\theta_1) = W_{output} \cdot( f^2_{\vartheta} \circ f^1_{\vartheta}(\sigma (W_{input} \cdot x +b_{input}))) + b_{output}
\end{eqnarray}
Where $\theta_1$ donates the full parameters of the neural network, including the weight and bias in the residual blocks and the input and output layers.

As this neural network plays the role of approximator of the indeterminate medium distributions, how to train such the network to learn the unknown physical information is the key problem. Taking \Eref{eq2_10} into consideration, the indeterminate medium distribution $\varepsilon(x)$ is entirely determined by the source term $g(x)$, which can be represented by essential boundary condition and natural boundary condition solely. Compared with the internal information, the boundary condition is easier to obtain. 

Substituting the essential boundary condition and the natural boundary condition with \Eref{eq2_10}, we can get the residual for a given boundary source point $P$:
\begin{eqnarray}
\label{eq2_15}
      &R_1(P)=c(P)u(P)+\int_\Gamma u(Q)\frac{\partial u^*(Q,P)}{\partial\boldsymbol{n}}d\Gamma(Q) \nonumber \\&-\int_\Gamma u^*(Q,P)\frac{\partial u(Q)}{\partial\boldsymbol{n}}d\Gamma(Q)-\int_\Omega u^*(q,P)g(q)d\Omega(q)
\end{eqnarray}

Utilizing the neural network $u_1(x;\theta_1)$ to replace the source term $g(x)$, we can define the loss function that guides the training of the neural network to learn the indeterminate medium distribution. Suppose $N_b$ source points are allocated on the boundary. Then the loss function is taken as the average of the residuals among the source points $\{P_i\}$:
\begin{eqnarray}
\label{eq2_16}
      Loss_1(\theta_1)=\frac{1}{N_b}\sum_{i=1}^{N_b}||R_1(P_i,\theta_1)||^2
\end{eqnarray}

The equivalent source term can be obtained by minimizing the loss function through the training process:
\begin{eqnarray}
\label{eq2_17}
      \theta_1^*=\arg\min_{\theta\in\Theta_1}Loss_1(\theta_1)     
\end{eqnarray}

However, employing a single neural network does not directly yield the solution function for the coefficient identification problems we aim to solve. To address this, we introduce a second neural network as a generator. Similar to the approximator, we utilize a neural network with two residual blocks, which has the same form as the approximator.

By simultaneously utilizing the source term distribution obtained from the first neural network and integrating it into \Eref{eq2_11}, we can generate the solution function of the partial differential equation. The residual for a given internal source point $p$ is expressed as:
\begin{eqnarray}
\label{eq2_18}
      R_{2}\left(p, \theta_{2}\right)=u\left(p\right)+\int_{\Gamma} u\left(Q\right) \frac{\partial u^{*}(Q, p)}{\partial \boldsymbol{n}} d \Gamma(Q) \nonumber \\-\int_{\Gamma} u^{*}(Q, p) \frac{\partial u(Q)}{\partial \boldsymbol{n}} d \Gamma(Q)-\int_{\Omega} u^{*}(q, p) u_{1}\left(q, \theta_{1}\right) d \Omega(q)
\end{eqnarray}

Except for the first term on the right-hand side of \Eref{eq2_18}, the other terms can be obtained through the two types of boundary conditions and the trained first neural network. Utilizing the generating neural network $u_2(x;\theta_2)$ to replace the solution $u(p)$ of the internal points $p$, we can define the loss function that guides the training of the neural network to learn the solution equation. Suppose $N_i$ source points are allocated on the boundary. Then the loss function is taken as the average of the residuals among the source points $\{p_i\}$:
\begin{eqnarray}
      \label{eq2_19}
            Loss_2(\theta_2)=\frac{1}{N_i}\sum_{i=1}^{N_i}||R_2(p_i,\theta_2)||^2
\end{eqnarray}

By training the second neural network to minimize this loss function, we derive the solution function for the PDE.The equivalent source term $g(x)$ and the solution function $u(x)$ can be obtained through the training process of the two neural networks. 

Given the established relationship between the equivalent source term and the indeterminate medium coefficients, as expressed in \Eref{eq2_20}, we can infer the indeterminate medium coefficients from the derived equivalent source term:
\begin{eqnarray}
      \label{eq2_20}
      g(x) = (f(x)+\nabla \varepsilon(x) \nabla u(x))/\varepsilon(x)
\end{eqnarray}

This collaborative approach between the two neural networks ensures an accurate and efficient solution to the coefficient identification problem, adhering to the physical constraints and boundary conditions.

To further ensure the accuracy of the computed results, we introduce a third neural network as a discriminator. This network is designed to evaluate the precision of the approximations generated by the first two neural networks.

The discriminator neural network's primary function is to assess the accuracy of the generated solutions by comparing them with known boundary conditions and internal points. This evaluation helps in refining the predictions made by the approximator and generator networks, ensuring higher accuracy and consistency.

The loss function for the discriminator neural network is designed to minimize the difference between the predicted solutions and the actual boundary conditions. Suppose $N_d$ points are allocated on the boundary and within the domain for evaluation. The loss function is given by:
\begin{eqnarray}
      \label{eq2_21}
      Loss_3(\theta_2)=\frac{1}{N_d}\sum_{i=1}^{N_d}||u_2(p_i;\theta)-u_{actual}(p_i)||^2
\end{eqnarray}
\begin{figure}[H]
      \centering
      \includegraphics[scale=0.7]{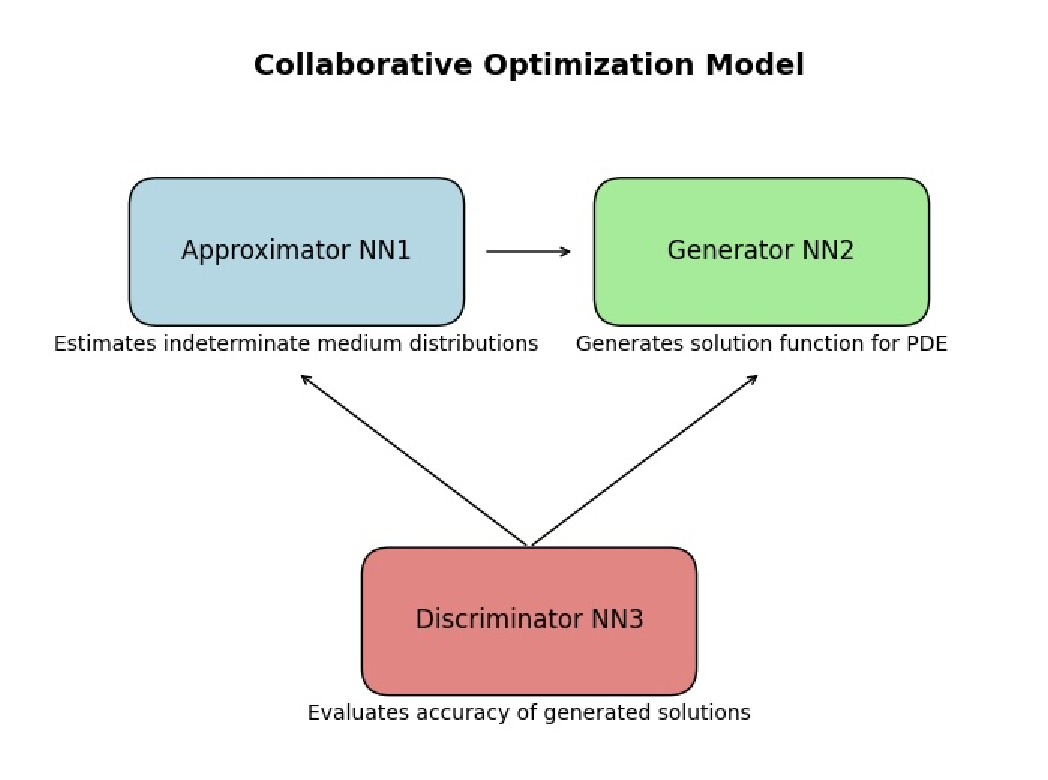}
      \caption{The relationships between the three neural networks.}
      \label{fig2_2}
\end{figure}

With the introduction of the discriminator, our collaborative optimization model now includes three neural networks working together:
\begin{itemize}
      \item {\bf approximator.} 
      Estimates the indeterminate medium distribution.
      \item {\bf Generator.}
      Generates the solution function for the PDE.
      \item {\bf Discriminator.}
      Evaluates the accuracy of the generated solutions.
\end{itemize}

This collaborative framework enhances the overall accuracy and robustness. The approximator and generator networks work together to solve the PDE, while the discriminator ensures the solutions are precise and consistent with known boundary and internal conditions. The relationships between the three neural networks are shown as \Fref{fig2_2}.

In summary, we conclude the algorithm of BIAN in Algorithm \eref{algorithm_1}.

In this section, we presented a novel collaborative optimization model involving three neural networks, the approximator, the generator and the discriminator, to solve the coefficient identification problems in PDEs. What's more, with the principle of energy conservation, we establish the equation relating the energy distribution within the domain to the energy flux on the boundary, which leverages boundary information to accurately predict the internal distribution of physical properties and field quantities. The proposed method addresses the challenges inherent in inverse problems, such as ill-posedness, nonlinearity, and high dimensionality, making it a powerful tool for solving a wide range of physical and engineering problems. The collaborative optimization model demonstrates significant potential for practical applications, providing a robust framework for accurately identifying unknown coefficients in PDEs. 

In the next section, we will demonstrate the convergence properties of the proposed algorithm. Compared with PINN, this algorithm exhibits a significant advantage in terms of convergence speed. 

\begin{algorithm}[H]
      \renewcommand{\algorithmicrequire}{\textbf{Input:}}
        \renewcommand{\algorithmicensure}{\textbf{Output:}}
        \caption{Algorithm of BIAN}
      \label{algorithm_1}
      \begin{algorithmic}[1] % 控制是否有序号
            \REQUIRE  Allocate $N_b$ source points $\{P_i\}$ and $M_b$ integration points $\{ Q_i \}$ on the boundary, $N_i$ source points $\{p_j\}$ and $M_i$ integration points $\{ q_j \}$ inside the domain.
            \ENSURE The distribution of the solution function and the medium in the solution domain.
          
            \STATE For each $P_i$ on the boundary, calculate the kernel function on all the internal integration points $\{q_j\}$.

            \STATE For each $P_i$ on the boundary, calculate the kernel function on all the boundary integration points $\{Q_i\}$.

            \STATE For each $p_j$ inside the domain, calculate the kernel function on all the internal integration points $\{q_j\}$.

            \STATE For each $p_i$ on the boundary, calculate the kernel function on all the boundary integration points $\{Q_i\}$.

            \STATE Initialize the parameters of three networks $\theta_1$, $\theta_2$ and $\theta_3$. Choose the number of iteration steps $N$ for the networks, respectively. 
            % for loop
            \FOR {$i=1,2,\cdots,N$}
                  \STATE Calculate the residual shown as \Eref{eq2_15} for all the source points $\{P_i\}$, then compute the loss function for the approximator following \Eref{eq2_16}. Update the parameters $\theta_1$ with the optimization algorithm.
                  \STATE Calculate the residual shown as \Eref{eq2_18} for all the source points $\{p_i\}$ inside the domain, then compute the loss function for the generator following \Eref{eq2_19} with the result of the first neural network. Update the parameters $\theta_2$ with the optimization algorithm.
                  \STATE Evaluate the accuracy of the outputs of the two networks by comparing them with given boundary and internal points, then compute the loss function for the discriminator with the form as \Eref{eq2_21}. Update the parameters $\theta_3$ with the optimization algorithm.
                  \STATE Refine the first two networks utilizing the feedback from the discriminator network.
            \ENDFOR
            \STATE After the training process, both the medium distributions and the resolution are solved.
      \end{algorithmic}
\end{algorithm}

\section{The convergence analysis of BIAN}
\label{section5}
In this section, we provide a rigorous convergence theory for BIAN with respect to the amount of training data. By employing probabilistic space-filling parameters, we derive an upper bound on the expected unregularized BIAN loss. Specifically, we utilize a probabilistic framework to analyze the learning dynamics of BIAN, establishing that as the number of training data points increases, the expected loss decreases with high probability. This bound offers a theoretical guarantee that, under certain regularity conditions, BIAN will converge to an accurate solution as the training set grows. Furthermore, we demonstrate that the convergence rate depends on the dimensionality of the problem and the smoothness of the underlying solution, providing insight into the trade-offs between data complexity and model performance. These results lay the foundation for understanding the performance of BIAN in practical applications and offer a clear direction for optimizing its training process to enhance efficiency and accuracy.

Before the demonstration, we introduce the mathematical principle which helps the demonstration of the convergence theorem. 
\begin{definition}[Hölder Continuity]
      Let \( f: \mathbb{R}^n \to \mathbb{R} \) be a function, and let \( \a lpha \in (0,1] \) and \( C > 0 \). We say that \( f \) is \textit{Hölder continuous} with exponent \( \alpha \) if for all \( x, y \in \mathbb{R}^n \), the following inequality holds:
      \[
      |f(x) - f(y)| \leq C |x - y|^\alpha.
      \]
      Here, \( \alpha \) is called the Hölder exponent, and \( C \) is a Hölder constant. When \( \alpha = 1 \), the function \( f \) is said to be \textit{Lipschitz continuous}.
\end{definition}

We call $[f]_{a;U}$, with the form of \Eref{eq4_1}, as the Hölder constant(coefficient) of $f$ on $U$.
\begin{eqnarray}
\label{eq4_1}
      [f]_{a;U}=\sup_{x,y\in U,x\neq y}\frac{|f(x)-f(y)|}{\|x-y\|^{\alpha}}<\infty,\quad0<\alpha\leq1,
\end{eqnarray}

As BIAN is aim to deal with problems in which indeterminate parameters in the differential operator with a set of training data. The convergence rate of the first Neural Network, which serves as the approximator, plays a crucial role in determining the overall convergence speed of the algorithm. Therefore, our primary focus is on analyzing the convergence properties of the approximator neural network. The training data consist of two types of data sets: boundary and internal integral data. A boundary integral datum is composed of the coordinate information and the two kinds of boundary conditions $(x_b,u(x_b),\partial u(x_b)/\partial \boldsymbol{n})$, where $x_b \in \partial U$, and a internal integral datum just contains the coordinate information $(x_i)$, where $x_i \in U$. The set of $m_b$ boundary integral data and the set of $m_i$ internal integral data are donated by $\mathcal{T}_{b}^{m_{b}}=\{\mathbf{x}_{b}^{i}\}_{i=1}^{m_{b}}$ and $\mathcal{T}_{i}^{m_{i}}=\{\mathbf{x}_{i}^{j}\}_{j=1}^{m_{i}}$, respectively. To approximate the solution of the PDE with the form as \Eref{eq2_2_1}, \Eref{eq2_2_2}, and \Eref{eq2_2_3}, we seek to find a neural network $h^* \in \mathcal{H}_{n}$ to minimize the loss function with the form as \Eref{eq4_2}.
\begin{eqnarray}
\label{eq4_2}
      L(x_{b},x_{i};h)=||F(x_{b})-A[h](x_i)||^{2}
\end{eqnarray}
Here $F(x_{b}) = m(x_b) + \int_\Gamma n(x_b)d\Gamma = c(x_b)u(x_b) + \int_\Gamma u(x_1)\frac{\partial u^*(x_1,x_b)}{\partial\boldsymbol{n}}-u^*(x_1,x_b)\frac{\partial u(x_1)}{\partial\boldsymbol{n}}d\Gamma(x_1)$, $A[h](x_i) =\int_\Omega z(x_i)d\Omega =\int_\Omega u^*(x_2,x_i)h(x_i)d\Omega(x_2)$. $x_1, x_2$ represent the points on the boundary and internal, respectively. 

Support $\mathcal{T}_{b}^{m_{b}}$ and $\mathcal{T}_{i}^{m_{i}}$ areindependently and identically distributed (iid) samples from probability distributions $\mu_b$ and $\mu_i$. Since the empirical probability distribution on $\mathcal{T}_{b}^{m_{b}}$ defined by $\mu_b^{m_b}=\frac{1}{m_b}\sum_{i=1}^{m_b}\delta_{x_b^i}$, the $\mathcal{T}_{i}^{m_{i}}$ is difined Similarly, the empirical loss function and expected loss function are obtained by taking the expectation on \Eref{eq4_2} with respect to $\mu^m=\mu_b^{m_b}\times\mathcal{T}_{i}^{m_{i}}$ and $\mu=\mu_b\times\mu_i$, respectively.
\begin{eqnarray}
\label{eq4_3}
      \mathrm{Loss}_{\boldsymbol{m}}(h)=\mathbb{E}_{\mu^{m}}[\mathbf{L}(\mathbf{x}_{b},\mathbf{x}_{i};h)], \quad\mathrm{Loss}(h)=\mathbb{E}_{\mu}[\mathbf{L}(\mathbf{x}_{b},\mathbf{x}_{i};h)].
\end{eqnarray}
If the expected loss function were available, the minimizer of this function would be the solution of the PDE or close to it. However, it is unavailable to get it in practice, the empirical loss function is used as a substitute. We will give an upper bound of the expected loss function, which involves the empirical loss function. The derivation is based on the probabilistic space filling arguments. In this regard, we make the following assumptions on the training data distributions.
\begin{assumption}
\label{as_1}
Let $U$ be a bounded domain in ${R}^{d}$ that is at least of class $C^{0,1}$ and $\Gamma$ be a closed subset of $\partial U$. Let $\mu_b$ and $\mu_i$ be probability distributions defined on $\Gamma$ and $U$, respectively. Let $\rho_b$ be the probability density of $\mu_b$ with respect to $d-1$-dimensional Lebesgue measure on $\Gamma$. Let $\rho_i$ be the probability density of $\mu_i$ with respect to $d$-dimensional Hausdorff measure on $U$.
\begin{enumerate}
      \item[1.] \( \rho_r \) and \( \rho_b \) are supported on \( \overline{U} \) and \( \Gamma \), respectively. Also, \( \inf_{\overline{U}} \rho_r > 0 \) and \( \inf_{\Gamma} \rho_b > 0 \).
      
      \item[2.] For \( \varepsilon > 0 \), there exists partitions of \( U \) and \( \Gamma \), \( \{ U_j^{\varepsilon} \}_{j=1}^{K_r} \) and \( \{ \Gamma_j^{\varepsilon} \}_{j=1}^{K_b} \), that depend on \( \varepsilon \), such that for each \( j \), there are cubes \( H_{\varepsilon}(z_{j,r}) \) and \( H_{\varepsilon}(z_{j,b}) \) of side length \( \varepsilon \) centered at \( z_{j,r} \in U_j^{\varepsilon} \) and \( z_{j,b} \in \Gamma_j^{\varepsilon} \), respectively, satisfying \( U_j^{\varepsilon} \subset H_{\varepsilon}(z_{j,r}) \) and \( \Gamma_j^{\varepsilon} \subset H_{\varepsilon}(z_{j,b}) \).
      \item[3.] For each \( m \), \( \mathcal{H}_m \) contains a network \( u_m^* \) satisfying 
      \(
      Loss_m(h_m^*) = 0.
      \)

      \item[4.] There exist positive constants \( c_r, c_b \) such that \( \forall \varepsilon > 0 \), the partitions from the above satisfy \( c_r \varepsilon^d \leq \mu_r(U_j^{\varepsilon}) \) and \( c_b \varepsilon^{d-1} \leq \mu_b(\Gamma_j^{\varepsilon}) \) for all \( j \).
      
      There exist positive constants \( C_r, C_b \) such that \( \forall x_r \in U \) and \( \forall x_b \in \Gamma \), \( \mu_r(B_{\varepsilon}(x_r) \cap U) \leq C_r \varepsilon^d \) and \( \mu_b(B_{\varepsilon}(x_b) \cap \Gamma) \leq C_b \varepsilon^{d-1} \), where \( B_{\varepsilon}(x) \) is a closed ball of radius \( \varepsilon \) centered at \( x \).
      
      Here \( C_r, c_r \) depend only on \( (U, \mu_r) \) and \( C_b, c_b \) depend only on \( (\Gamma, \mu_b) \).
  \end{enumerate}
  
\end{assumption}

We now state our result that bounds the expected PINN loss in terms of the empirical loss. Let us recall that $m$ is the vector of the number of training data points, i.e., $m = (m_b,m_i)$. The constants $c_b,C_b,c_i,C_i$ are introduced in Assumption 3.1. For a function $f$, $[f]_{a;U}$ is the Hölder constant of $f$ with exponent $a$ in $U$.

\begin{theorem}
      \label{th_1}
      Suppose Assumption 3.1 holds. Let $m_b$ and $m_i$ be the number of iid samples from $\mu_b$ and $\mu_i$, respectively. With probability at least, $(1-\sqrt{m_{b}}(1-1/\sqrt{m_{b}})^{m_{b}})(1-\sqrt{m_{i}}(1-1/\sqrt{m_{i}})^{m_{i}})$, we have
      \begin{eqnarray}
      \label{eq4_4}
      \mathrm{Loss}(h)\leq C'_{1}\cdot \mathrm{Loss}_{\boldsymbol{m}}(h)+C'_{max}(m_b^{-\frac{a}{d-1}-0.5}+m_i^{-\frac{a}{d}-0.5}),
      \end{eqnarray}
      here $C'_1=3 \frac{C_{b}C_{i}}{c_b c_i}m_b^{0.5} m_i^{0.5} \sqrt d ^{2d-1}$ and $C_{max} = max\{ 2[n]^2_{a,U}d^{a+0.5d-0.5} c_b^{-\frac{2a+d-1}{d-1}}\\, 2[z]^2_{a,U} d^{a+0.5d} c_b^{-\frac{2a+d}{d}}\}$.
\end{theorem}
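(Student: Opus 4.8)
The plan is to read \Eref{eq4_4} as a \emph{probabilistic quadrature estimate}: \(\mathrm{Loss}(h)=\mathbb{E}_{\mu}[\mathbf{L}]\) is the integral of the pointwise loss \(\mathbf{L}(x_b,x_i;h)=\|F(x_b)-A[h](x_i)\|^2\) of \Eref{eq4_2} against the product measure \(\mu=\mu_b\times\mu_i\), whereas \(\mathrm{Loss}_{\boldsymbol m}(h)\) is the same integral against the empirical product measure \(\mu^m=\mu_b^{m_b}\times\mu_i^{m_i}\). I would bound the former by the latter through a space-filling argument built on the partitions supplied by Assumption~\ref{as_1}. The first move is to calibrate the cell size to the sample size: choose \(\varepsilon_b\) and \(\varepsilon_i\) so that \(c_b\varepsilon_b^{\,d-1}=m_b^{-1/2}\) and \(c_i\varepsilon_i^{\,d}=m_i^{-1/2}\). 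With the lower measure bounds of Assumption~\ref{as_1} this forces the partitions \(\{\Gamma_j^{\varepsilon}\}\) and \(\{U_j^{\varepsilon}\}\) to contain at most \(\sqrt{m_b}\) and \(\sqrt{m_i}\) cells, each of measure at least \(m_b^{-1/2}\), \(m_i^{-1/2}\) respectively.

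Second, I would establish the coverage event. For a single interior cell the probability that none of the \(m_i\) i.i.d.\ interior samples falls in it is at most \((1-c_i\varepsilon_i^{\,d})^{m_i}=(1-m_i^{-1/2})^{m_i}\); a union bound over the \(\le\sqrt{m_i}\) cells shows every interior cell is occupied with probability at least \(1-\sqrt{m_i}(1-m_i^{-1/2})^{m_i}\), and identically every boundary cell is occupied with probability at least \(1-\sqrt{m_b}(1-m_b^{-1/2})^{m_b}\). Because the boundary and interior draws are independent, these two events are independent and their intersection carries probability at least the product appearing in Theorem~\ref{th_1}; on this event every product cell \(\Gamma_j^{\varepsilon}\times U_k^{\varepsilon}\) contains an admissible sample pair.

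Third, working on the coverage event, I would split the double integral cell by cell, choose a guaranteed pair \((x_b^{\ast},x_i^{\ast})\) in each product cell, and apply \(\|F(x_b)-A[h](x_i)\|^2\le 3\|F(x_b)-F(x_b^{\ast})\|^2+3\|A[h](x_i^{\ast})-A[h](x_i)\|^2+3\|F(x_b^{\ast})-A[h](x_i^{\ast})\|^2\), the factor \(3\) being exactly the one carried by \(C'_1\). For the last summand — the pointwise loss at a sample pair — I would pull out the per-cell bound \(\mu_b(\Gamma_k)\mu_i(U_j)\le C_bC_i(\sqrt d\,\varepsilon_b)^{d-1}(\sqrt d\,\varepsilon_i)^{d}\), whose cube-to-ball conversion yields the geometric factor \(\sqrt d^{\,2d-1}\), and then sum over cells (bounded by the full empirical sum over all \(m_bm_i\) pairs, since \(\mathbf L\ge 0\)); with \(\varepsilon_b^{\,d-1}\varepsilon_i^{\,d}\,m_bm_i=(c_bc_i)^{-1}\sqrt{m_bm_i}\) this reproduces \(C'_1\cdot\mathrm{Loss}_{\boldsymbol m}(h)\). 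For the two oscillation summands I would use the Hölder continuity of the kernels to bound them by \([n]_{a;U}^2(\sqrt d\,\varepsilon_b)^{2a}\) and \([z]_{a;U}^2(\sqrt d\,\varepsilon_i)^{2a}\), multiply by the matching single cell measure \(C_b(\sqrt d\,\varepsilon_b)^{d-1}\) or \(C_i(\sqrt d\,\varepsilon_i)^{d}\), and so obtain the factors \((\sqrt d\,\varepsilon_b)^{2a+d-1}\) and \((\sqrt d\,\varepsilon_i)^{2a+d}\); substituting the calibration and using \((2a+d)/(2d)=a/d+1/2\) turns these into the rates \(m_b^{-a/(d-1)-1/2}\), \(m_i^{-a/d-1/2}\) with all dimensional and \(c\)-constants gathered into \(C'_{max}\).

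The main obstacle I anticipate is the bookkeeping forced by the \emph{coupled product structure}. Unlike the textbook single-integral space-filling estimate, the loss here mixes a boundary source point with an interior integration variable, so two partitions, two calibrations of \(\varepsilon\) and two independent coverage events must be tracked simultaneously, and the cross term has to be disentangled by the triangle-type splitting above; the delicate point is balancing the powers of \(\sqrt m\) so that the coefficient of \(\mathrm{Loss}_{\boldsymbol m}(h)\) comes out as \(C'_1\propto\sqrt{m_bm_i}\) while each oscillation term retains the extra half-power of decay. A second, more analytic subtlety is concealed in the hypothesis that \([n]_{a;U}\) and \([z]_{a;U}\) are finite: since the fundamental solution \(u^{*}(Q,P)\) is singular as \(Q\to P\), the Hölder continuity in the source point of the boundary functional \(F\) and of the interior operator \(A[h]\) — with the explicit constants that enter \(C'_{max}\) — must be justified near that singularity, which is where the regularity of the boundary data and of the network output through the activation \(\sigma(x)=\max\{x^3,0\}\) has to be invoked.
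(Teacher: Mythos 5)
Your proposal is correct and is essentially the paper's own argument: the same factor-$3$ splitting of the pointwise loss around a nearby sample pair, the same H\"older bounds producing the $\varepsilon_b^{2a+d-1}$ and $\varepsilon_i^{2a+d}$ oscillation terms, the same calibration $c_b\varepsilon_b^{d-1}=m_b^{-1/2}$, $c_i\varepsilon_i^{d}=m_i^{-1/2}$, and the same coverage probability $(1-\sqrt{m_b}(1-1/\sqrt{m_b})^{m_b})(1-\sqrt{m_i}(1-1/\sqrt{m_i})^{m_i})$ --- the only cosmetic differences being that you decompose the expectation over the partition cells of Assumption 3.1 and bound the sum over occupied cells by the full empirical sum (using $\mathbf{L}\geq 0$), whereas the paper's Lemma 1 uses Voronoi cells of the sample points and bounds the maximal cell measure via $\gamma_b^{m_b,*}\leq C_b\epsilon_b^{d-1}$, $\gamma_i^{m_i,*}\leq C_i\epsilon_i^{d}$, and that you prove the space-filling step inline by a union bound whereas the paper cites it as Lemma 2 from Shin et al. Both versions yield identical constants $C'_1$ and $C'_{max}$, and your closing caveat --- that finiteness of $[n]_{a;U}$ and $[z]_{a;U}$ must be justified near the singularity of $u^{*}(Q,P)$ --- identifies a point the paper's own proof also leaves unaddressed.
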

\begin{proof}
      The proof can be found in Appendix A.
\end{proof}

As the number of boundary point $m_b$ and the internal point $m_i$ has the relationship as:
\begin{eqnarray}
      m_b = O(m_i^{\frac{d-1}{d}})
\end{eqnarray}
The number of training data is depends on the number of boundary point. We can obtain
\begin{eqnarray}
\label{eq4_5}
Loss(h)=O(m_b^{-\frac{a}{d-1}-0.5}).
\end{eqnarray}

From Theorem \ref{th_1}, we establish the relationship between the convergence rate of BIAN with the number of boundary data number. In the next section, we provide a numerical experiment to illustrate our theoretical findings. Also, experiments to prove the feasibility and accuracy of BIAN have been conducted. 

\section{Numerical Examples}
\label{section3}
In this section, we focus on the convergence and the feasibility of the Boundary-Informed Alone Neural Network (BIAN), as well as the impact of collaborative training on solution efficiency. To evaluate these aspects, we conducted a series of numerical experiments. These experiments were designed to address two-dimensional potential problems with varying complexity, including scenarios with indeterminate spatially varying medium distributions, indeterminate piecewise uniform medium distributions, as well as the convergence rates of different methods applied to the same problem.

The experiments aimed to demonstrate the robustness and precision of BIAN in accurately approximating unknown medium parameters and solving the associated partial differential equations. By comparing the performance of BIAN with existing methods, we highlighted its superior computational accuracy and efficiency. Furthermore, we assessed the contribution of the collaborative training framework, involving the approximator, generator, and discriminator neural networks, in enhancing the overall solution process.

The results from these experiments provide comprehensive insights into the effectiveness of BIAN in handling complex inverse problems, showcasing its potential for practical applications in various fields such as engineering and physics. The following sections detail the experimental setup, methodologies, and findings, underscoring the advantages of the proposed approach over traditional techniques.

\subsection{Evaluation of Computational Feasibility}
\label{sec4_1}
In this section, we examine the computational feasibility of BIAN by analyzing its performance in solving complex inverse problems. The primary focus is on evaluating the efficiency of the collaborative optimization model and its ability to handle high-dimensional data and complex medium distributions. We conducted a numerical experiment with a two-dimensional Laplace problem to benchmark BIAN against traditional methods, including Physics-Informed Neural Networks (PINN), Deep Ritz Method (DRM), and Weak Adversarial Networks (WAN), assessing key metrics such as computational time, resource usage, and scalability. The results demonstrate BIAN's capability to provide accurate solutions within reasonable computational limits, highlighting its potential for practical applications in engineering and physical sciences.

Consider the Laplace equation with indeterminate spatially varying medium distribution:
\numparts
\begin{eqnarray}
     -\nabla \cdot (\varepsilon(\mathbf{x})\nabla u(\mathbf{x}) = f(\mathbf{x}), \quad &\mathbf{x}\in\Omega \label{eq3_1_1}\\
     u(\mathbf{x})=sin(\pi y),\quad &\mathbf{x}\in\Gamma_1 \label{eq3_1_2}\\
     u(\mathbf{x})=0,\quad &\mathbf{x}\in \Gamma_2 \label{eq3_1_3}
\end{eqnarray}
\endnumparts
Here $\Omega=\{(x,y)|(x,y)\in(0,1)\times(0,1)\}$, $\Gamma_1 = \{ (x,y)|(x,y)=\{0,1\}\times (0,1) \}$, $\Gamma_2 = \partial \Omega \backslash \Gamma_1$, $\varepsilon(\mathbf{x})=e^{\pi^2(x-x^2)/2} / \pi$ and $f(x)=0$, the solution of this problem $u(\mathbf{x})=e^{\pi^2(x^2-x)/2}\cdot sin(\pi y)$. The natural boundary condition can be obtained from the analytical expression of the solution. The solution and the medium distribution of the Laplace problem is shown as \Fref{fig3_1}, and we can get the analytical solution of the natural boundary condition.
\begin{figure}[H]
	\centering
	\begin{minipage}{0.49\linewidth}
		\centering
		\includegraphics[width=1\linewidth]{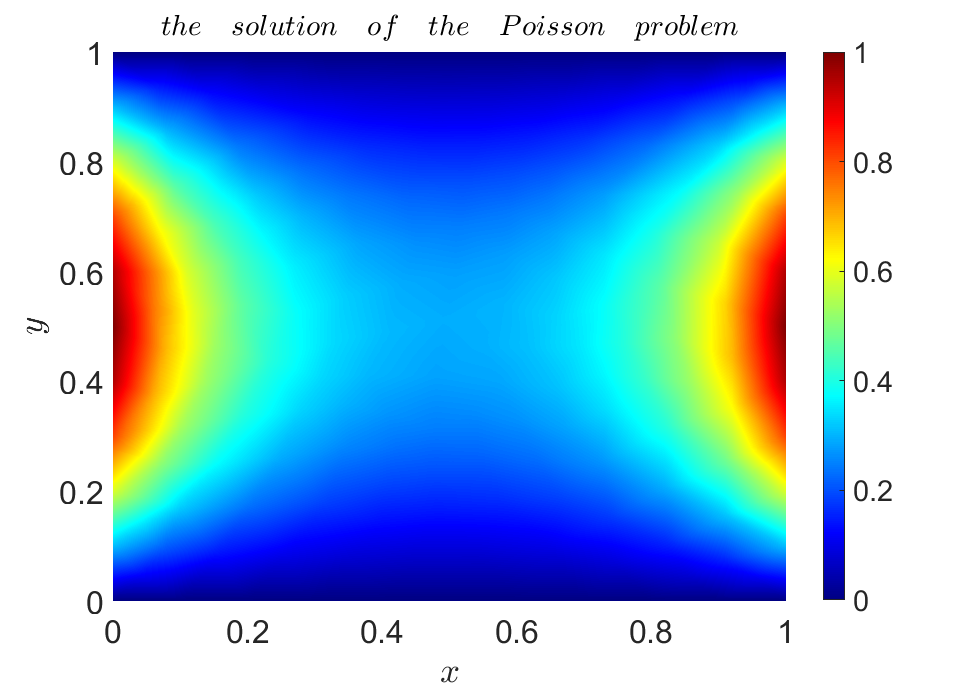}
		\label{fig3_1}
	\end{minipage}
	\begin{minipage}{0.49\linewidth}
		\centering
		\includegraphics[width=1\linewidth]{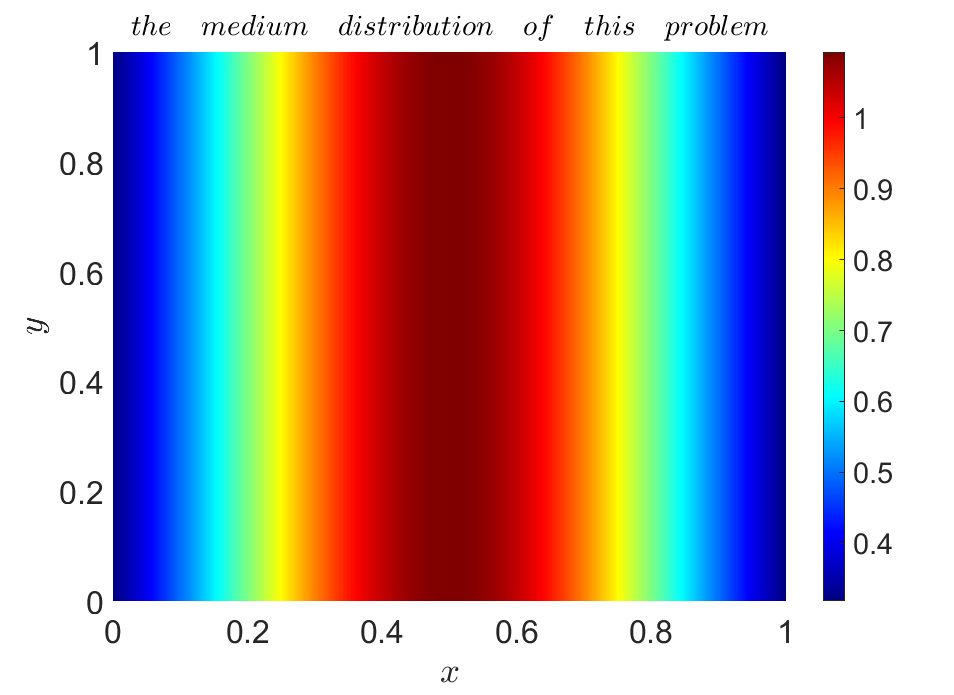}
		\label{fig3_2}
	\end{minipage}
      \caption{The solution and the medium distribution of the Laplace problem.}
\end{figure}

Each layer of the residual blocks in the approximator network and generator network we used to solve this problem has $m=10$ neurons, there are a total of parameters in this model. We select 10 evenly spaced points on each boundary as the boundary source points $\{ P_i\}$ and 100 random points inside the domain as the internal source point $\{ p_j\}$. The selection of the integration points is consistent with the source points. Since we want to compare this method with existing methods, such as PINN, DRM and WAN, the three methods are also utilized to solve this Laplace problem. For the three methods, we use a multilayer perceptron (MLP) with 4 hidden layers and each layer has 20 neurons and we choose 500 random points as the training points. 

The solution and the medium distribution obtained from each method is shown as follows and the $L_2$ error of each method is shown in \Tref{tbl_1}. By comparing the results from different methods, it is evident that BIAN not only demonstrates better accuracy but also achieves faster convergence compared to the other three proposed methods. More importantly, BIAN requires only boundary information to solve the problem, significantly simplifying the data acquisition process.
\begin{table}[h]
      \centering
      \caption{The $L_2$ error of the solution and medium distribution for the four methods} \label{tbl_1}
      \begin{tabular}{l|llll}
          \hline
          \diagbox{$L_2$ error}{Method}& PINN  & WAN& DRM & BIAN \\
          \hline
          medium distribution & 0.0313 & 0.0526 & 0.9872 &0.0143\\
          \hline
          solution & 0.3117 & 0.2978 & 0.2281& 0.0121 \\
          \hline
      \end{tabular}
\end{table}

\begin{figure}[H]
      \centering
      \includegraphics[scale=0.49]{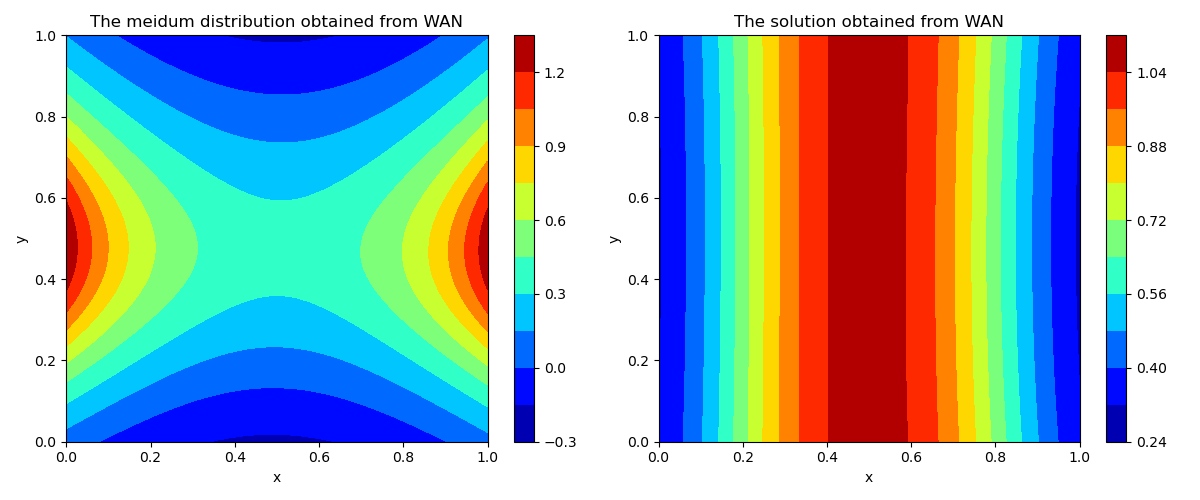}
      \caption{The solution and the medium distribution of the Laplace problem obtained from WAN.}
      \label{fig3_3}
\end{figure}
\begin{figure}[H]
      \centering
      \includegraphics[scale=0.49]{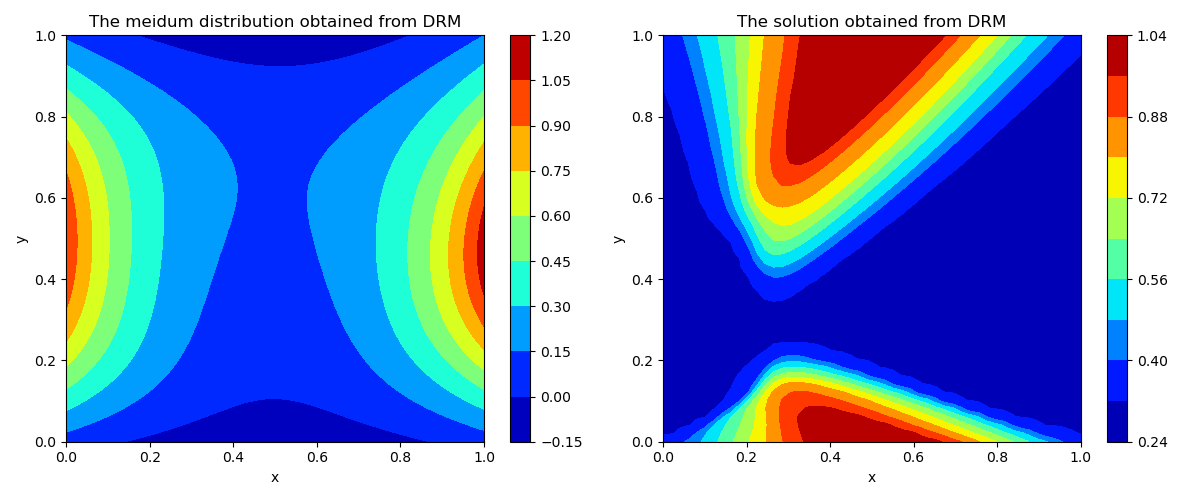}
      \caption{The solution and the medium distribution of the Laplace problem obtained from DRM.}
      \label{fig3_4}
\end{figure}
\begin{figure}[H]
      \centering
      \includegraphics[scale=0.49]{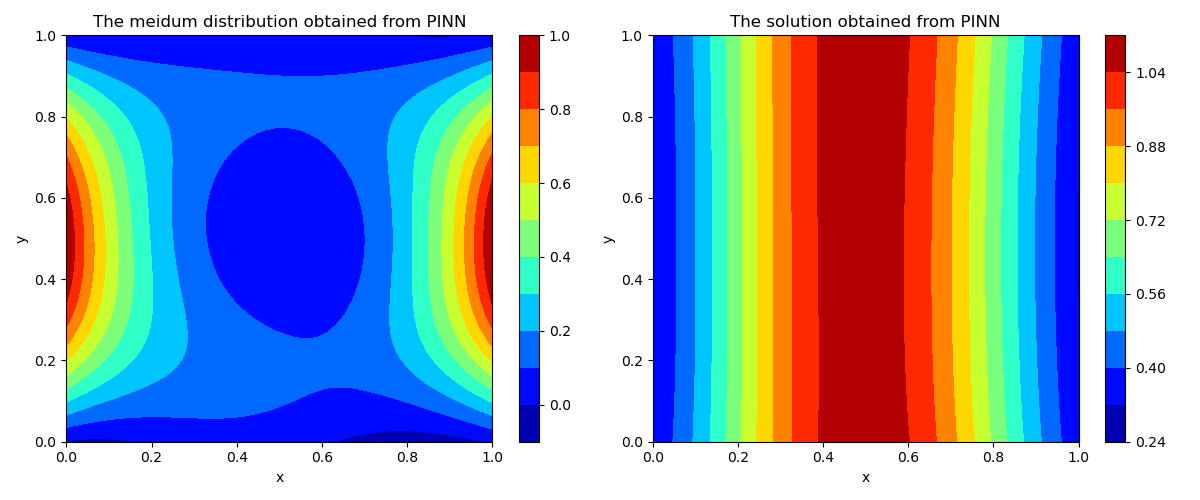}
      \caption{The solution and the medium distribution of the Laplace problem obtained from PINN.}
      \label{fig3_5}
\end{figure}
\begin{figure}[H]
	\centering
	\begin{minipage}{0.49\linewidth}
		\centering
		\includegraphics[width=1\linewidth]{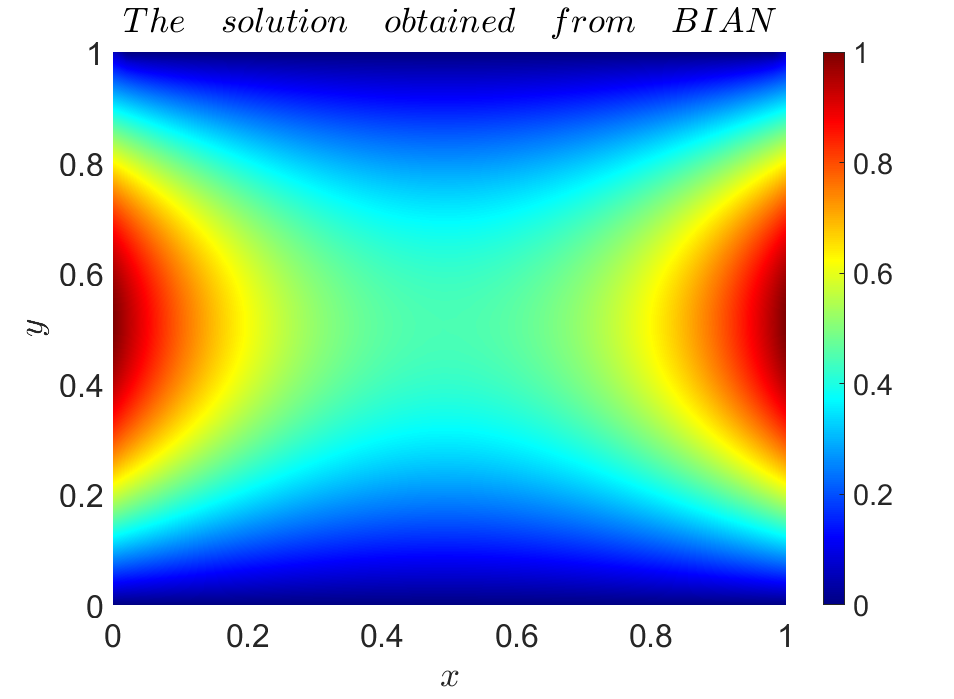}
		\label{fig3_6}
	\end{minipage}
	\begin{minipage}{0.49\linewidth}
		\centering
		\includegraphics[width=1\linewidth]{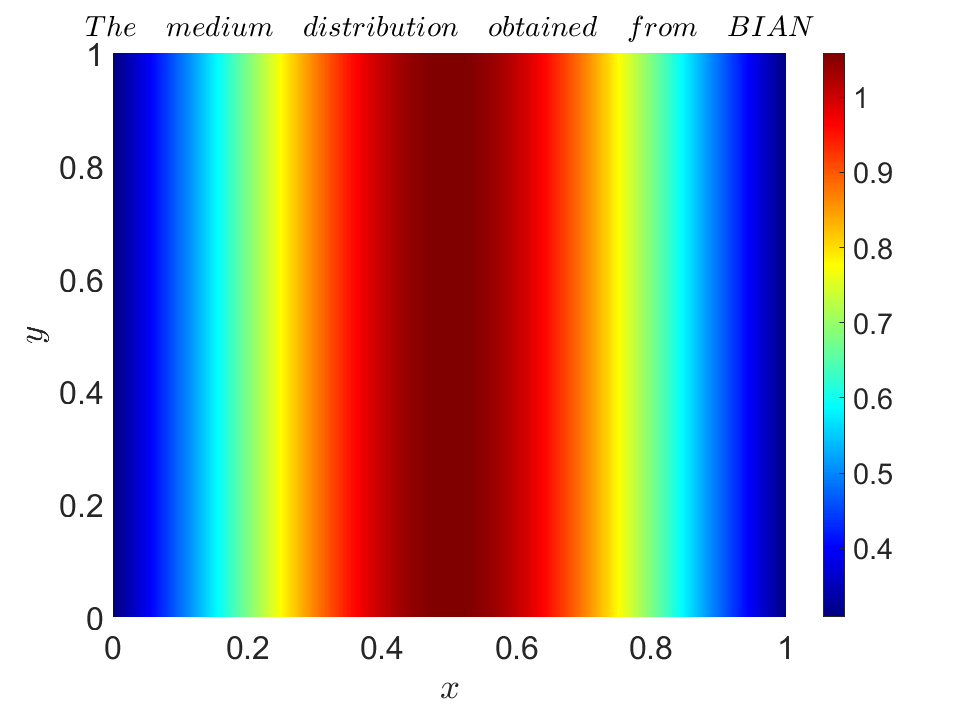}
		\label{fig3_7}
	\end{minipage}
      \caption{The solution and the medium distribution of the Laplace problem obtained from BIAN.}
\end{figure}
\begin{figure}[H]
      \centering
      \includegraphics[scale=0.49]{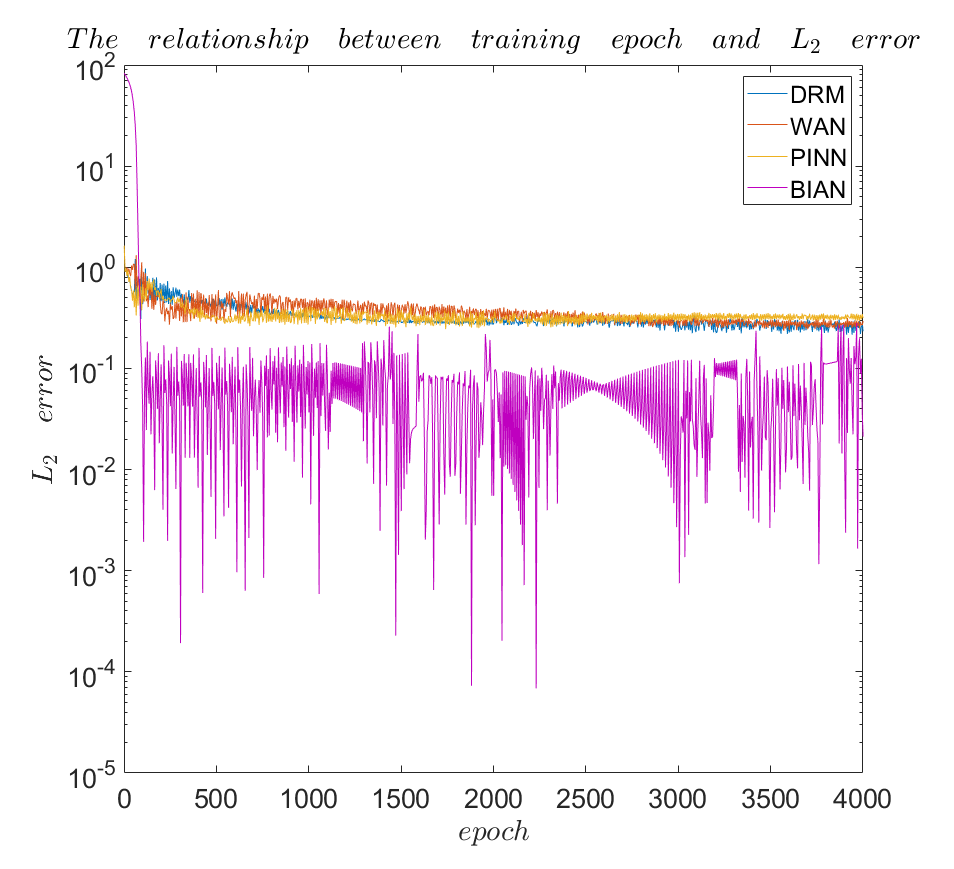}
      \caption{The relationship between the training epoch and the $L_2$ error of the solution.}
      \label{fig3_8}
\end{figure}

\subsection{The convergence analysis}
In this experiment, we aim to compare the advantages of BIAN over traditional methods in terms of convergence speed. Specifically, we focus on evaluating how BIAN performs in scenarios with varying medium distributions. We will analyze the convergence rates for BIAN and benchmark them against traditional approaches to highlight the efficiency gains that BIAN provides in solving complex inverse problems. Through this comparison, we aim to demonstrate the superior convergence performance of BIAN in handling high-dimensional and non-linear cases, where traditional methods may struggle to achieve the same level of accuracy and computational efficiency.

The problem to be solved is this experiment, as well as the architecture of the neural network, are identical to those used in \Sref{sec4_1}. We focus on comparing the relationship between solution accuracy and the size of the training data for BIAN and PINN when applied to the same problem. Specifically, we aim to assess how the two methods scale in terms of accuracy as the amount of training data increases. By examining this relationship, we can gain insights into the data efficiency of BIAN compared to PINN, particularly in high-dimensional and complex scenarios. This comparison is crucial for understanding the trade-offs between data size and solution precision, and it highlights the potential advantages of BIAN in reducing the need for extensive training data while maintaining or improving accuracy. \Fref{fig3_15} shows the relationship between the number of training data and the $L_2$ error. As a reference, The $O(m^{-1.5})$ rata of convergence is plotted as a dotted line. As we can see, with the same number of training data, BIAN achieves higher accuracy compared with IPINN. Additionally, BIAN exhibits faster convergence rate compared with IPINN.  
\begin{figure}[h]
      \centering
      \includegraphics[scale=0.49]{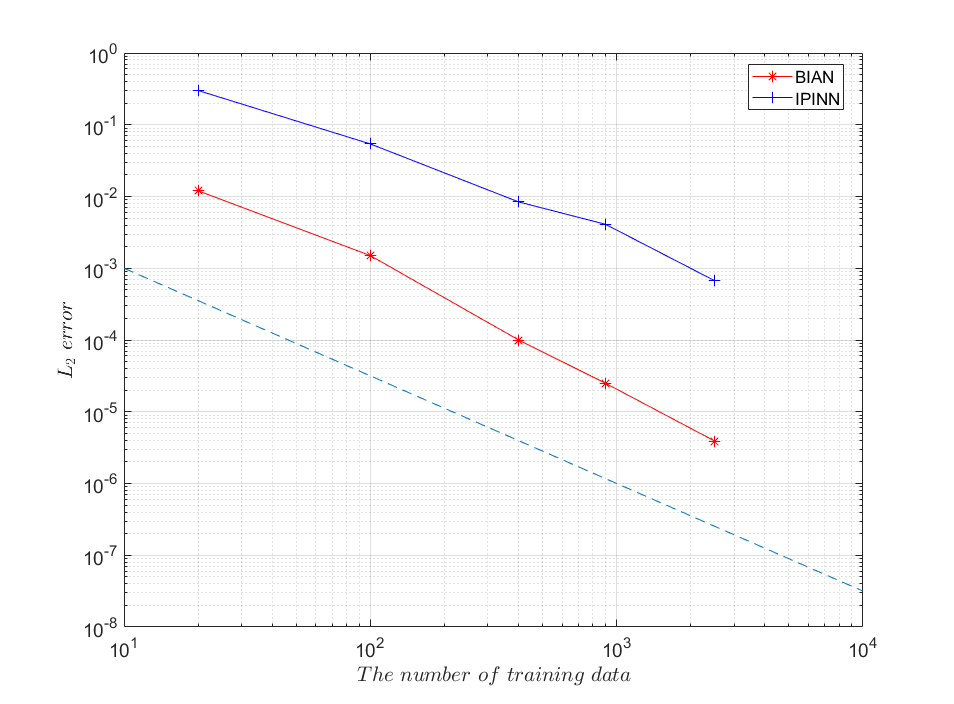}
      \caption{The relationship between the number of training data and the $L_2$ error of the solution.}
      \label{fig3_15}
\end{figure}

\subsection{Dealing with Piecewise uniform medium distribution}
The following experiment is designed to demonstrate the effectiveness of BIAN in solving problems with piecewise uniform medium distributions. By conducting this experiment, we aim to showcase the capability of BIAN to accurately approximate the unknown medium parameters and solve the associated partial differential equations under these specific conditions. The results will highlight the performance of BIAN in terms of computational accuracy and efficiency, further establishing its potential as a robust method for solving complex inverse problems.

Consider the Poisson equation with indeterminate piecewise uniform medium distribution:
\numparts
\begin{eqnarray}
     -\nabla \cdot (\varepsilon(\mathbf{x})\nabla u(\mathbf{x}) = f(\mathbf{x}), \quad &\mathbf{x}\in\Omega \label{eq3_2_1}\\
     u(\mathbf{x})=1,\quad &\mathbf{x}\in\Gamma_1 \label{eq3_2_2}\\
     u(\mathbf{x})=0,\quad &\mathbf{x}\in \Gamma_2 \label{eq3_2_3}
\end{eqnarray}
\endnumparts
Here $\Omega=\{(x,y)|(x,y)\in(0,1)\times(0,1)/(0,0.5)\times(0.5,1) \}$, $\Gamma_1 = \{ (x,y)|(x,y)=(0,1)\times {0} \}$, $\Gamma_2 = \partial \Omega \backslash \Gamma_1$, $\varepsilon(\mathbf{x})=10$ when $\mathbf{x}=\{(x,y)|(x,y)=(0,1)\times (0,0.5)\}$, $\varepsilon(\mathbf{x})=5$ when $\mathbf{x}=\{(x,y)|(x,y)=(0.5,1)\times (0.5,1)\}$ and $f(x)=1$. However, the two kinds of boundary conditions are all needed when utilizing BIAN to solve the inverse problems, we obtain the natural boundary condition for this problem with the Finite difference Method (FDM). The solution and the medium distribution of the Laplace problem is shown as \Fref{fig3_3}, and we can get the analytical solution of the natural boundary condition. Each layer of the residual blocks in the approximator network and generator network we used to solve this problem has $m=10$ neurons, there are a total of parameters in this model. We select 10 evenly spaced points on each boundary as the boundary source points $\{ P_i\}$ and 100 random points inside the domain as the internal source point $\{ p_j\}$. The selection of the integration points is consistent with the source points.
\begin{figure}[htbp]
	\centering
	\begin{minipage}{0.49\linewidth}
		\centering
		\includegraphics[width=1\linewidth]{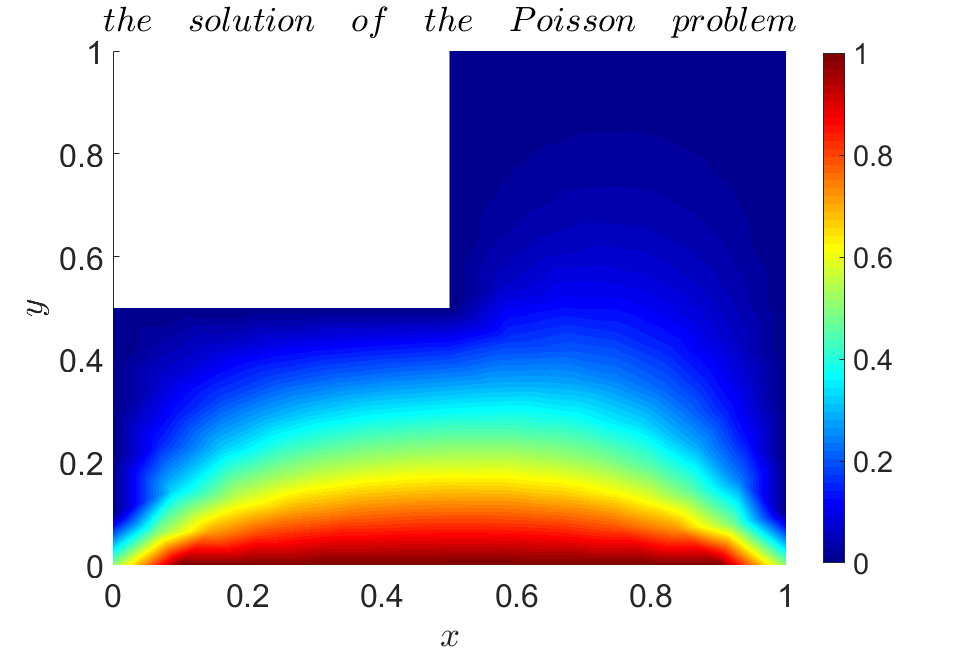}
		\label{fig3_9}
	\end{minipage}
	\begin{minipage}{0.49\linewidth}
		\centering
		\includegraphics[width=1\linewidth]{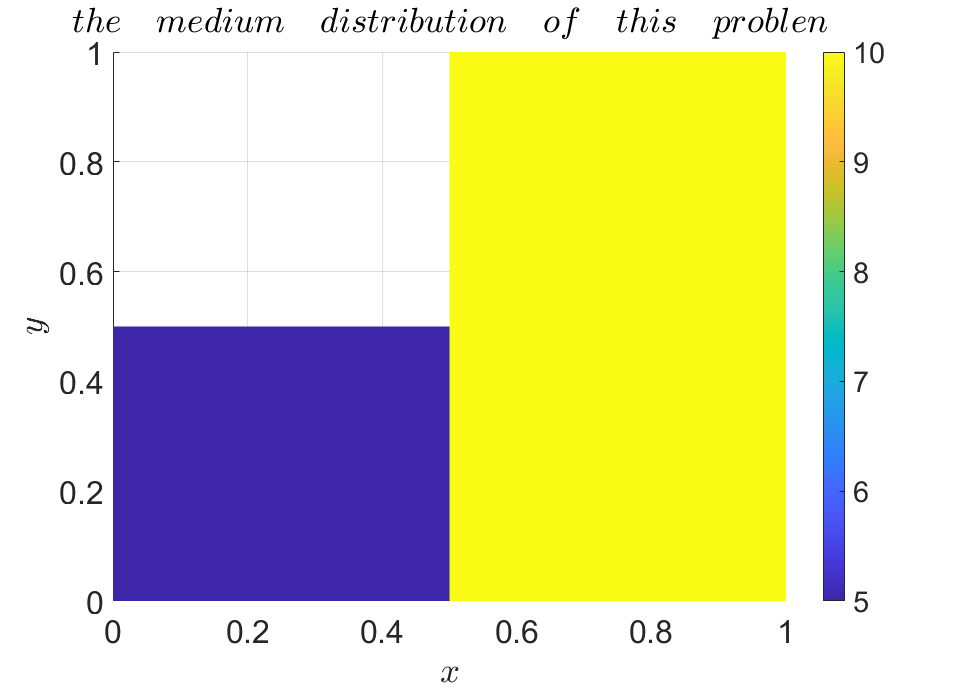}
		\label{fig3_10}
	\end{minipage}
      \caption{The solution and the medium distribution of the Poisson problem.}
\end{figure}
\begin{figure}[htbp]
	\centering
	\begin{minipage}{0.49\linewidth}
		\centering
		\includegraphics[width=1\linewidth]{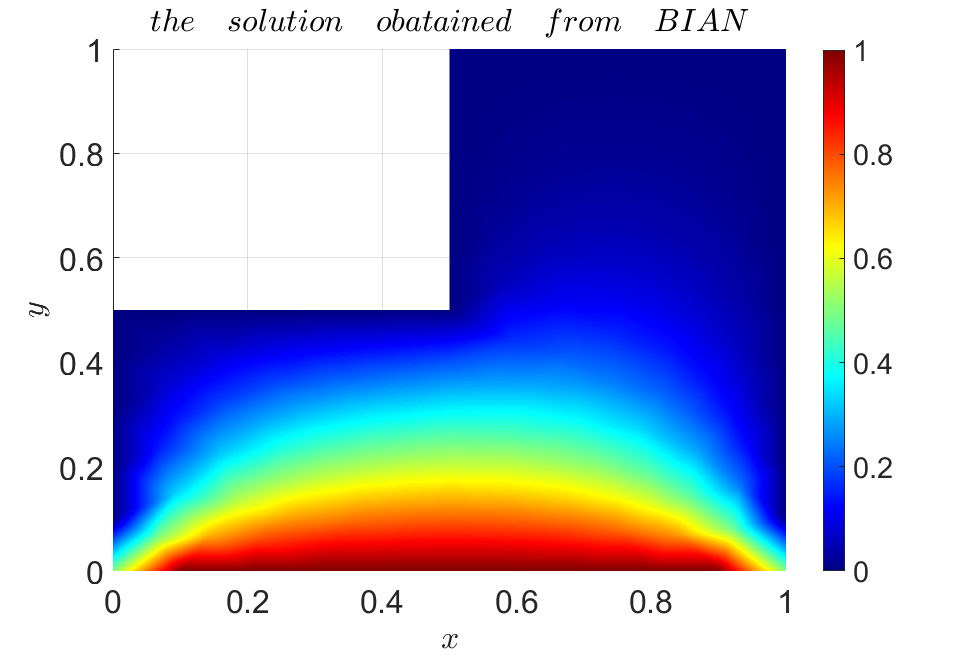}
		\label{fig3_11}
	\end{minipage}
	\begin{minipage}{0.49\linewidth}
		\centering
		\includegraphics[width=1\linewidth]{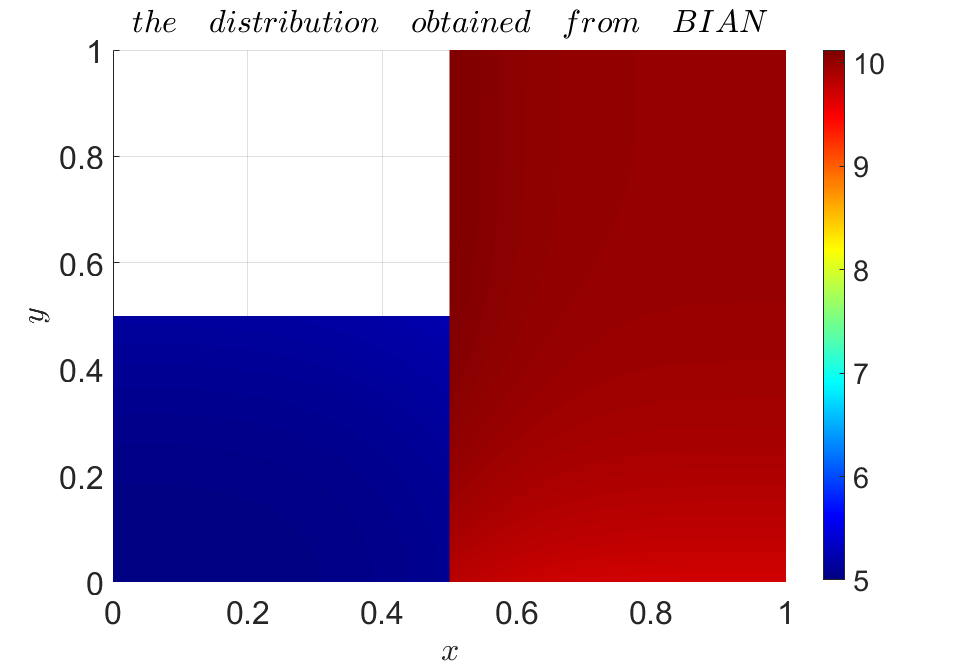}
		\label{fig3_12}
	\end{minipage}
      \caption{The solution and the medium distribution of the Poisson problem obtained from BIAN.}
\end{figure}
\begin{figure}[H]
	\centering
	\begin{minipage}{0.49\linewidth}
		\centering
		\includegraphics[width=1\linewidth]{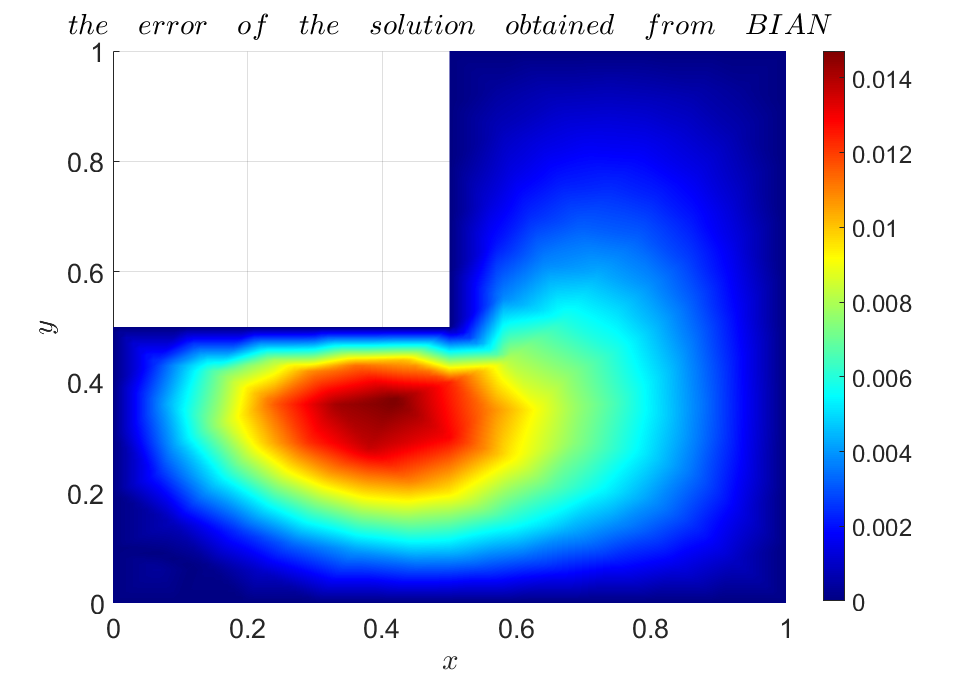}
		\label{fig3_13}
	\end{minipage}
	\begin{minipage}{0.49\linewidth}
		\centering
		\includegraphics[width=1\linewidth]{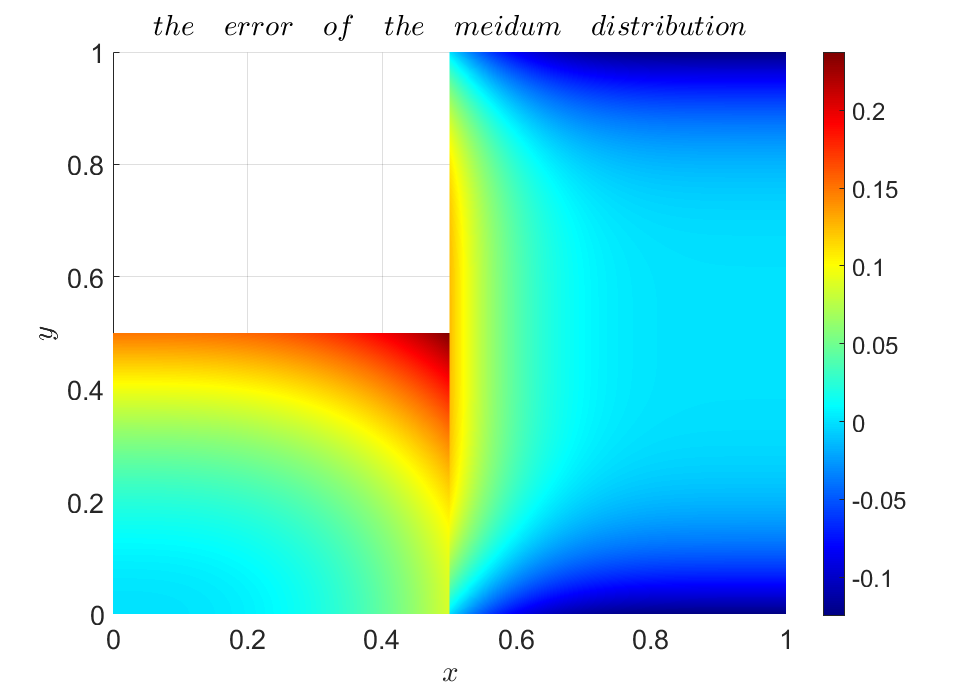}
		\label{fig3_14}
	\end{minipage}
      \caption{The error of the solution and the medium distribution of the Poisson problem.}
\end{figure}

Each layer of the residual blocks in the approximator network and generator network we used to solve this problem has $m=10$ neurons, there are a total of parameters in this model. We select 10 evenly spaced points on each boundary as the boundary source points $\{ P_i\}$ and 100 random points inside the domain as the internal source point $\{ p_j\}$. The selection of the integration points is consistent with the source points. 
The solution and the medium distribution approximated by BIAN is shown as \Fref{fig3_11}. The $L_2$ errors of the solution and medium distribution are $0.0079$ and $0.069$, respectively. From this numerical experiment, it proves that BIAN can not only deal with spatially varying medium distribution problems but also solve the piecewise uniform medium distribution problems.

\subsection{Dealing with high-dimensional problem}
To demonstrate the capability of the BIAN in addressing high-dimensional and geometrically complex problems, this section focuses on solving a three-dimensional anisotropic diffusion equation. We consider the partial differential equation defined on the unit cubic domain $\Omega=[0,1]^3$:
\begin{eqnarray}
     \nabla \cdot (\epsilon(\mathbf{x})\nabla u(\mathbf{x}))=10, \quad \mathbf{x}\in \Omega  \label{eq3_3_1}
\end{eqnarray}
with homogeneous Dirichlet boundary conditions:
\begin{eqnarray}
	u(\mathbf{x})=0,\quad \mathbf{x}\in \partial \Omega \label{eq3_3_2}
\end{eqnarray}
where $\epsilon(\mathbf{x})$ donates the spatially varying media distribution, in this problem $\epsilon(\mathbf{x})=\sin(x_1)+\cos(x_2)+x_3$. In contrast to traditional approaches that rely on internal node information, BIAN is immune to the curse of dimensionality. 

To address the challenges of solving high-dimensional problems, we . Each layer of the residual blocks in the approximator network and generator network we used to solve this problem has $m=10$ neurons, there are a total of parameters in this model. We select 10 evenly spaced points on each boundary as the boundary source points $\{ P_i\}$ and 100 random points inside the domain as the internal source point $\{ p_j\}$. The selection of the integration points is consistent with the source points. 
	
\begin{figure}[H]
      \centering
      \includegraphics[scale=0.6]{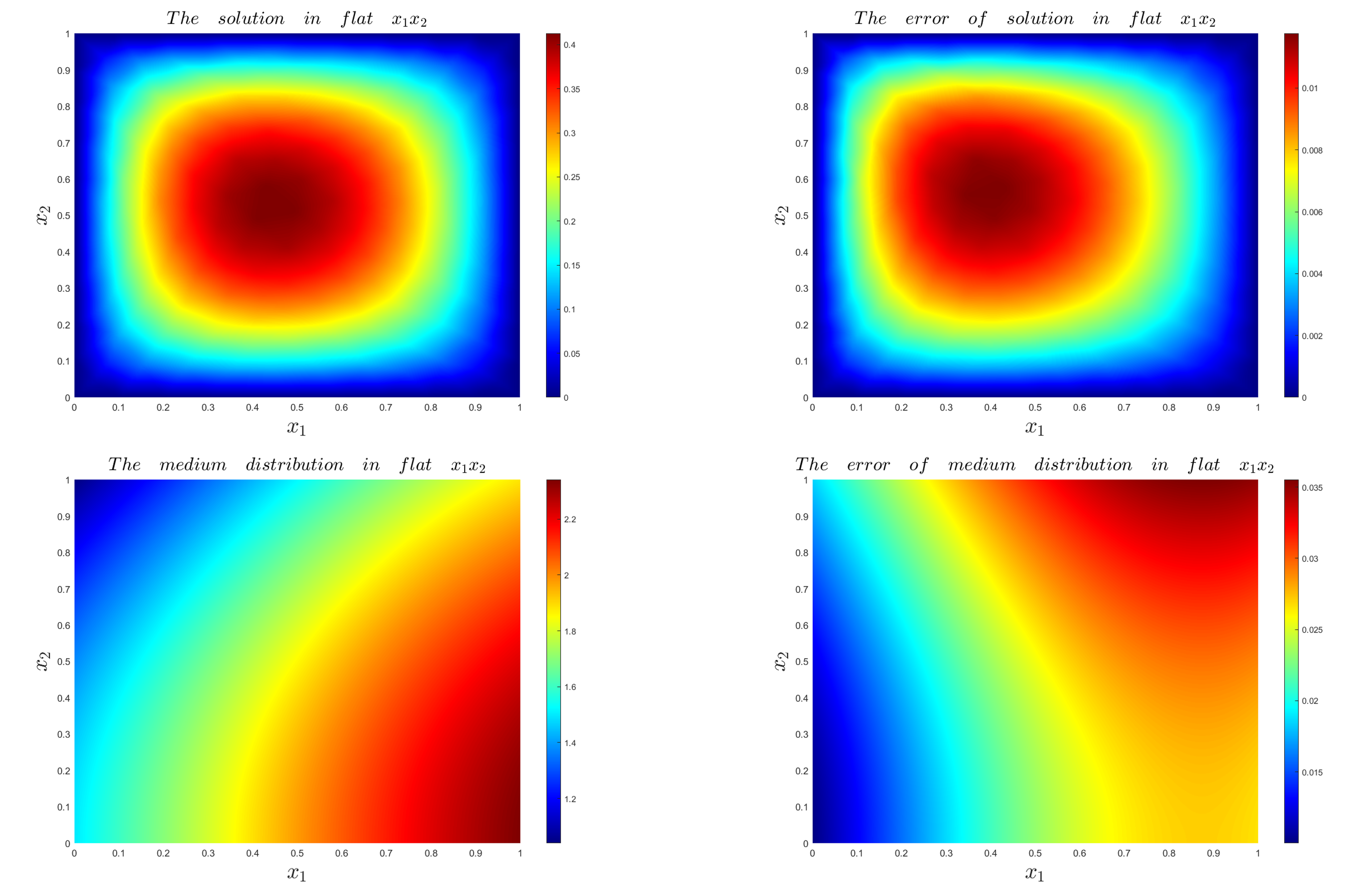}
      \caption{The results and error distributions of the solution function and medium distribution on the $x_1x_2$ plane obtained by BIAN.}
      \label{fig3_15}
\end{figure}

\begin{figure}[H]
      \centering
      \includegraphics[scale=0.6]{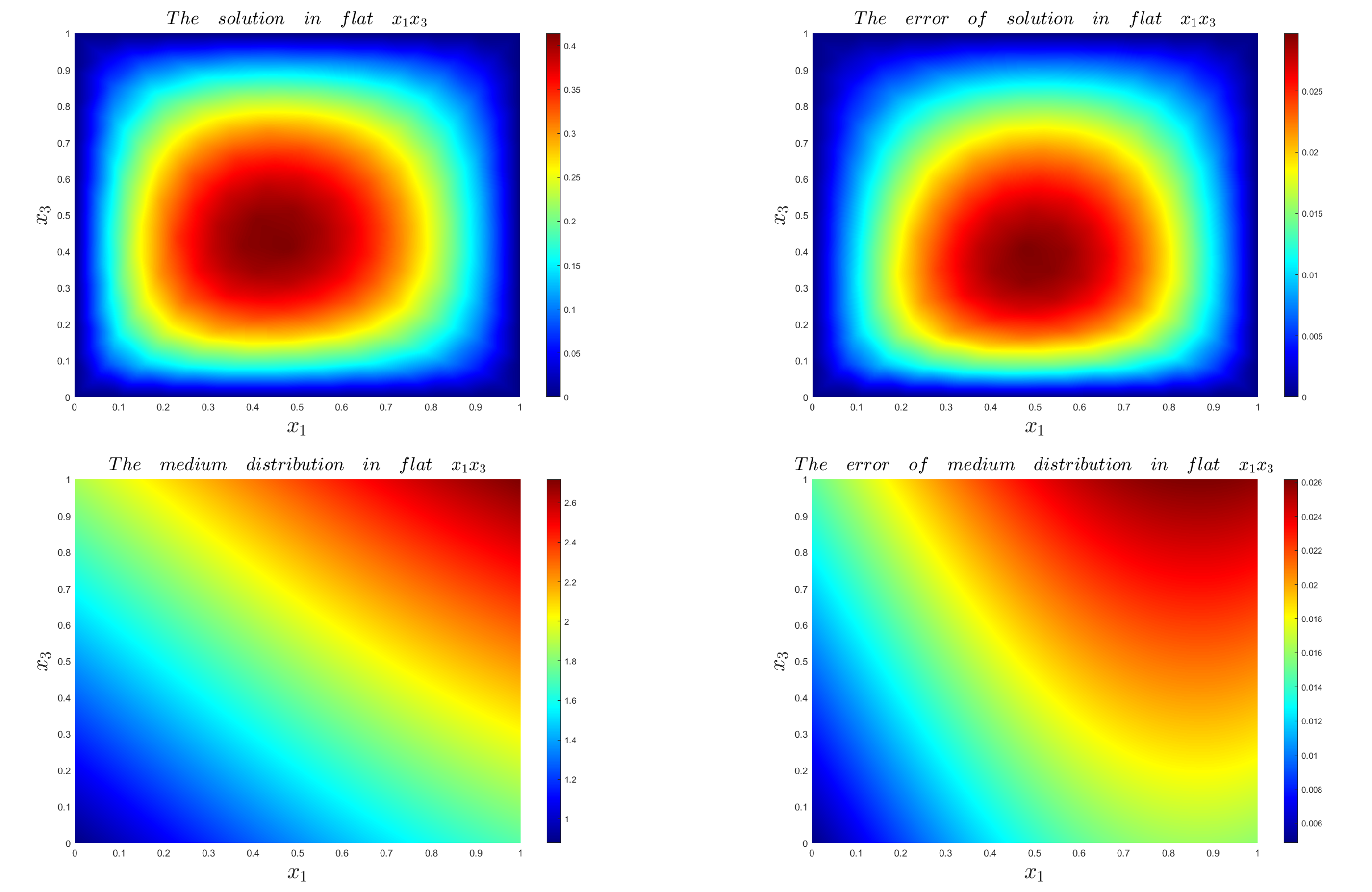}
      \caption{The results and error distributions of the solution function and medium distribution on the $x_1x_3$ plane obtained by BIAN.}
      \label{fig3_16}
\end{figure}

\begin{figure}[H]
      \centering
      \includegraphics[scale=0.6]{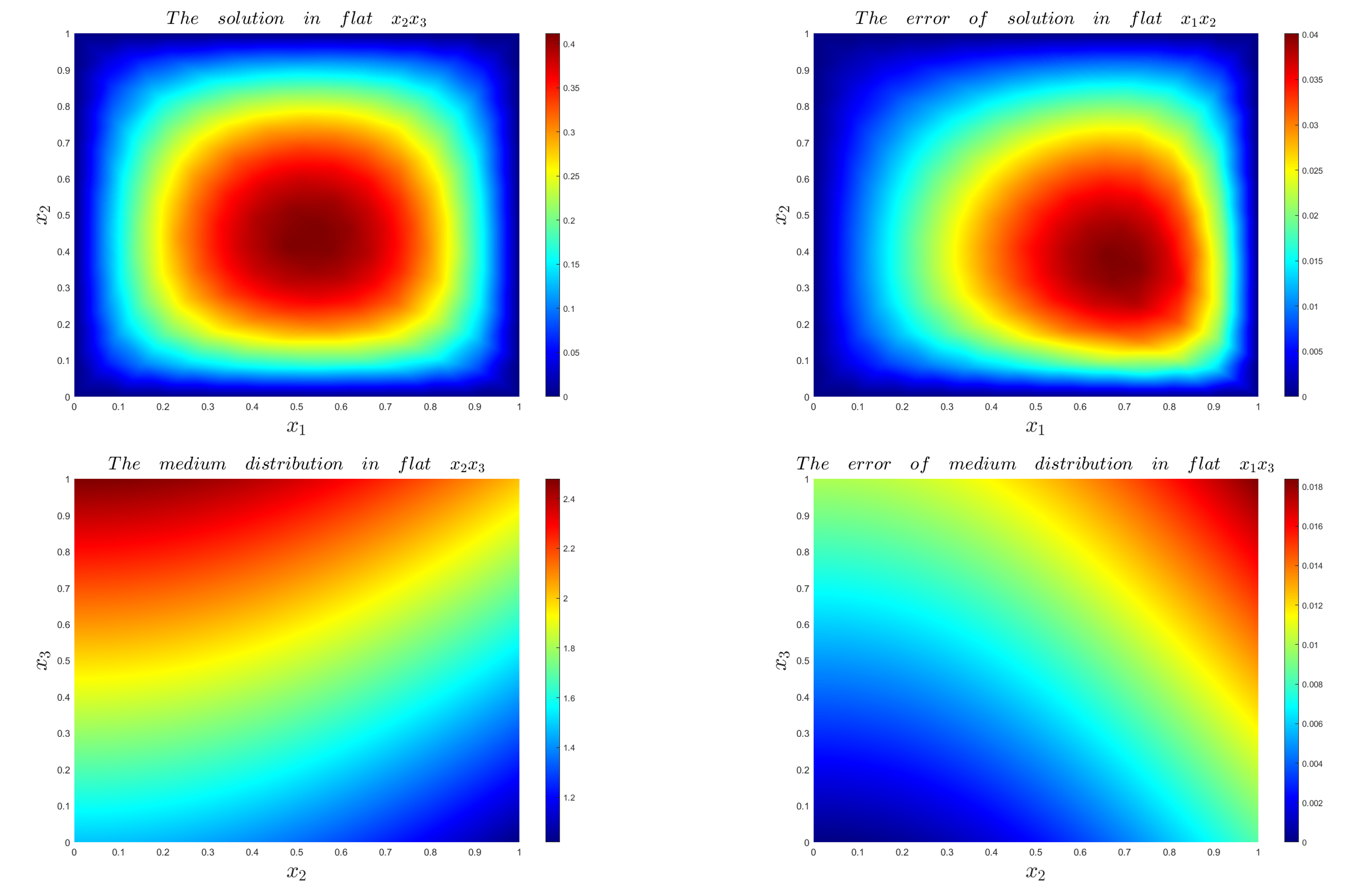}
      \caption{The results and error distributions of the solution function and medium distribution on the $x_2x_3$ plane obtained by BIAN.}
      \label{fig3_17}
\end{figure}

\begin{figure}[H]
      \centering
      \includegraphics[scale=0.4]{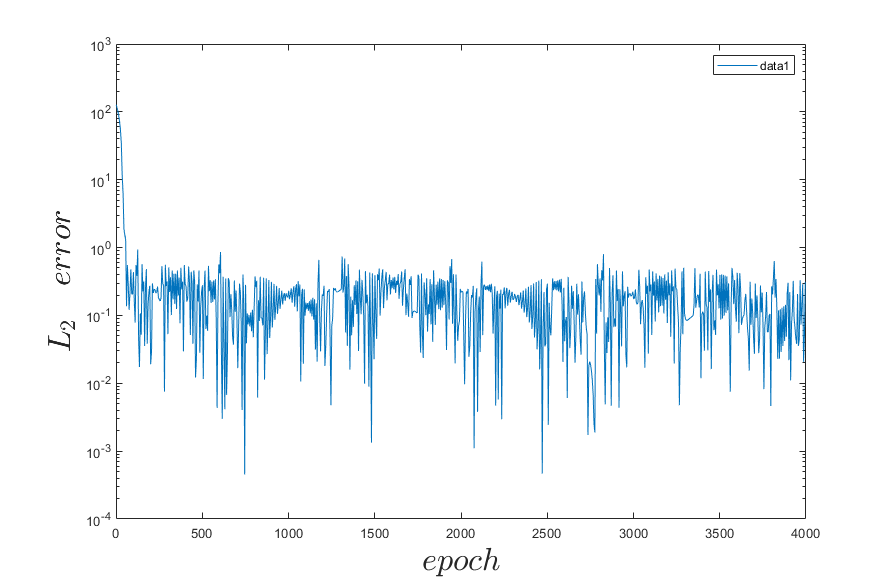}
      \caption{The relationship between the training epoch and the $L_2$ error of the solution.}
      \label{fig3_18}
\end{figure}
	
A comparative analysis of the solution functions and medium error distributions obtained by BIAN on the$x_1x_2$, $x_1x_3$ and $x_2x_3$ planes is presented in \Fref{fig3_15}, \Fref{fig3_16}, and \Fref{fig3_17}, respectively. These results are compared against the numerical solutions to evaluate the accuracy of BIAN. The evolution of the $L_2$ error with respect to the number of training iterations is illustrated in \Fref{fig3_18}.

As shown in \Fref{fig3_18}, the computational efficiency of BIAN remains robust even as the dimensionality increases. Specifically, BIAN achieves relatively accurate results within 500 training iterations for three-dimensional problems, demonstrating comparable performance to its two-dimensional counterpart. However, a slight degradation in solution accuracy is observed in higher dimensions, which can be attributed to the impact of dimensionality on the convergence rate of BIAN. Under the same data volume, the training performance of BIAN gradually declines as the dimensionality increases. Additionally, the oscillations in the $L_2$ error during training are likely caused by the random sampling of source points within the domain.
	
\section{Conclusion}
\label{section4}
In this work, we proposed a novel method based on representing medium coefficients with a neural network to solve inverse problems. This method, Boundary-Informed Alone Neural Network (BIAN), offers several significant advantages:
\begin{itemize}
    \item {} 
    With the incorporation of Green's theorem of energy conservation, the relationship between boundary energy flux and the energy distribution within the solution
    domain has be established. There is no need for inner points when training the neural network, which provides significant assistance in addressing practical issues. Moreover, measurement points can be sampled relatively uniformly along the boundary, mitigating the issue of non-uniform sample data distribution. 
    \item {} 
    The method exhibits strong performance in approximating complex functions and solving high-dimensional problems due to the high-dimensional capabilities of neural networks.
    \item {} 
    With the collaborative training between three neural networks, both training accuracy and convergence speed are enhanced compared to when the two neural networks are trained independently.
    \item{}
    This method is capable of achieving higher accuracy under the same training data. Moreover, it demonstrates a faster convergence rate, making it more efficient in solving complex problems. The combination of improved precision and quicker convergence not only reduces computational time but also enhances the overall performance, particularly in scenarios where data availability is limited or problem complexity increases.
\end{itemize}

However, we also some disadvantages that could be addressed in future works.
\begin{itemize}
    \item {} 
    The integral term in the loss function requires computation via Gaussian integration, which may lead to a reduction in solution accuracy.
    \item {} 
    When dealing with piecewise medium problems, the position of the interface between the two media is essential. Inferring the position of medium distribution from data may be impractical
\end{itemize}

Future research could explore the introduction of new neural network architectures, novel optimization algorithms, and advanced activation functions to potentially yield superior results. The numerical experiments conducted demonstrate BIAN's superior computational accuracy and efficiency compared to existing methods such as PINN, DRM, and WAN, particularly in solving problems with piecewise uniform medium distributions. This work highlights the potential of BIAN as a robust method for solving complex inverse problems, paving the way for its application in various fields such as engineering and physical sciences.

\appendix
\section{Proof of Theorem 1}
The proof consists two lemmas.
\begin{lemma}
      Let $\mathcal{T}_{b}^{m_b}=\{\mathbf{x}_{b}^{i}\}_{i=1}^{m_{b}}$ and $\mathcal{T}_{i}^{m_i}=\{\mathbf{x}_{i}^{j}\}_{j=1}^{m_{i}}$. Support $m_b$ and $m_i$ are large enough to satisfy the following: for any $x_b \in \Gamma$ and $x_i \in U$, there exists $x'_b \in \mathcal{T}_{b}$ and $x'_i \in \mathcal{T}_{i}$ such that $||x_b - x'_b||\leq \epsilon_r$ and $||x_i - x'_i||\leq \epsilon_i$. Then we can get:
      \begin{eqnarray}
      \label{eq6_1}
      \mathrm{Loss}(h) \leq C_1\mathrm{Loss}_m(h) + C_{max}(\epsilon_b^{2a}+\epsilon_b^{2a+d-1} +\epsilon_i^{2a+d})
      \end{eqnarray}
      where $C_1=3 C_{b}C_{i}m_b m_i \epsilon_{b}^{d-1} \epsilon_{i}^{d}$ and $C_{max} = max\{2[m]^2_{a,U}, 2[n]^2_{a,U}, 2[z]^2_{a,U}\}$.
\end{lemma}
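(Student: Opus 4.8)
The plan is to run a deterministic space-filling (covering) argument, the randomness of the sampling having already been absorbed into the hypothesis of the lemma: we are told that $m_b,m_i$ are large enough that every $x_b\in\Gamma$ and $x_i\in U$ admits a training point within distance $\epsilon_b$, $\epsilon_i$, and Assumption \ref{as_1} additionally supplies partitions $\{\Gamma_j^{\epsilon}\}$, $\{U_k^{\epsilon}\}$ with two-sided control on the masses of the cells. I would begin by writing the expected loss as an integral over the product domain,
\begin{eqnarray}
\mathrm{Loss}(h)=\int_{\Gamma}\int_{U}\|F(x_b)-A[h](x_i)\|^2\,d\mu_i(x_i)\,d\mu_b(x_b), \nonumber
\end{eqnarray}
decomposing it as the sum of the integrals over the cell products $\Gamma_j^{\epsilon}\times U_k^{\epsilon}$, and associating to each such cell a training pair $(x_b^j,x_i^k)$ lying within $\epsilon_b,\epsilon_i$ of every point of that cell (available by the covering hypothesis).

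On each cell I would insert the sample value and apply the elementary inequality $(\alpha+\beta+\gamma)^2\le 3(\alpha^2+\beta^2+\gamma^2)$ to split the integrand as
\begin{eqnarray}
\|F(x_b)-A[h](x_i)\|^2\le 3\|F(x_b^j)-A[h](x_i^k)\|^2+3\|F(x_b)-F(x_b^j)\|^2+3\|A[h](x_i)-A[h](x_i^k)\|^2. \nonumber
\end{eqnarray}
The first term is constant on the cell, so its integral contributes $3\,\mu_b(\Gamma_j^{\epsilon})\mu_i(U_k^{\epsilon})\|F(x_b^j)-A[h](x_i^k)\|^2$; bounding the cell masses by $C_b\epsilon_b^{d-1}$ and $C_i\epsilon_i^{d}$ (Assumption \ref{as_1}.4) and summing over $(j,k)$ converts $\sum_{j,k}\|F(x_b^j)-A[h](x_i^k)\|^2$ into $m_bm_i\,\mathrm{Loss}_m(h)$, which is precisely $C_1\mathrm{Loss}_m(h)$ with $C_1=3C_bC_im_bm_i\epsilon_b^{d-1}\epsilon_i^{d}$. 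For the two oscillation terms I would further decompose $F=m+\int_{\Gamma}n\,d\Gamma$ and $A[h]=\int_{\Omega}z\,d\Omega$ and apply $(\alpha+\beta)^2\le 2\alpha^2+2\beta^2$, so that the Hölder constants $[m]_{a;U}$, $[n]_{a;U}$, $[z]_{a;U}$ appear. The pointwise part of $F$ gives $\|m(x_b)-m(x_b^j)\|^2\le[m]_{a;U}^2\epsilon_b^{2a}$, which after integrating the inert variable against its full probability mass produces the $\epsilon_b^{2a}$ term; the boundary-integral part produces the $\epsilon_b^{2a+d-1}$ term and the domain-integral part the $\epsilon_i^{2a+d}$ term, the shared prefactor being collected into $C_{\max}=\max\{2[m]_{a;U}^2,\,2[n]_{a;U}^2,\,2[z]_{a;U}^2\}$.

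The hard part will be exactly the bookkeeping for the two integral terms, namely showing that the oscillations of $\int_{\Gamma}n\,d\Gamma$ and $\int_{\Omega}z\,d\Omega$ scale as $\epsilon_b^{2a+d-1}$ and $\epsilon_i^{2a+d}$ rather than the naive $\epsilon_b^{2a},\epsilon_i^{2a}$ that one obtains by pulling a supremum outside the integral. The surplus factors $\epsilon_b^{d-1}$ and $\epsilon_i^{d}$ must originate from the $(d-1)$- and $d$-dimensional mass of the region over which the integrand actually varies, and some delicacy is required because the kernels $u^*$ and $\partial u^*/\partial\boldsymbol{n}$ are singular on the diagonal $x_1=x_b$, $x_2=x_i$; one must localize the Hölder estimate near the source point and weight it by the cell masses of Assumption \ref{as_1}, while keeping each inert variable integrated against total mass one so that no spurious powers of $m_b$ or $m_i$ are introduced. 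Once the three oscillation integrals are controlled in this way, summing over all cells and recombining with the sample term yields \Eref{eq6_1}.
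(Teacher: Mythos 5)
Your proposal is correct in substance and follows essentially the same route as the paper's own proof: a deterministic covering argument in which the loss is split by the three-term inequality $\|\alpha+\beta+\gamma\|^2\le 3(\|\alpha\|^2+\|\beta\|^2+\|\gamma\|^2)$ around nearby training pairs, the oscillation terms are controlled by H\"older constants, and the cell masses are bounded via Assumption~\ref{as_1} to yield $C_1=3C_bC_im_bm_i\epsilon_b^{d-1}\epsilon_i^{d}$. The one organizational difference is how the empirical loss is extracted: the paper tiles $\Gamma\times U$ by Voronoi cells of the training points, so cells and training pairs are in bijection and the weighted sum $\sum_{i,j}\gamma_b^i\gamma_i^jL(x_b^i,x_i^j;h)$ is dominated by $\gamma_b^{m_b,*}\gamma_i^{m_i,*}\,m_bm_i\,\mathrm{Loss}_m(h)$ with no further argument, whereas you tile by the partitions of Assumption~\ref{as_1} and assign one training pair to each cell. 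That assignment need not be injective (distinct cells can share their nearest training pair), so your claim that $\sum_{j,k}\|F(x_b^j)-A[h](x_i^k)\|^2$ is bounded by $m_bm_i\,\mathrm{Loss}_m(h)$ needs a multiplicity bound; one follows from Assumption~\ref{as_1}(4) (disjoint cells of mass at least $c_b\epsilon_b^{d-1}$ packed into a ball of mass at most $C_b(1+\sqrt d)^{d-1}\epsilon_b^{d-1}$), but it costs a dimension-dependent constant in $C_1$, as does the fact that a single training point serving a whole cell is only within $(1+\sqrt d)\epsilon_b$, not $\epsilon_b$, of every point of that cell. Two further remarks. First, your treatment of the pointwise part $m$ of $F$ produces the $\epsilon_b^{2a}$ term and the constant $2[m]^2_{a,U}$, which is what the lemma actually states; the paper's own proof silently discards $m(x_b)-m(x_b')$ in \Eref{eq6_3}, and its concluding bound \Eref{eq6_11} has no $\epsilon_b^{2a}$ term at all, so on this point your argument matches the statement better than the source does. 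Second, the step you flag as the hard part---that the oscillations of $\int_\Gamma n\,d\Gamma$ and $\int_\Omega z\,d\Omega$ gain the measure factors $\epsilon_b^{d-1}$ and $\epsilon_i^{d}$ over the naive H\"older bound---is precisely what the paper asserts without justification in \Eref{eq6_3} and \Eref{eq6_4}; neither your proposal nor the paper supplies the localization argument required there (especially given the diagonal singularity of the kernels), so you are no less rigorous than the source on this point, and more candid about where the real difficulty lies.
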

\begin{proof}
      Let first consider the extended form of Cauchy-Schwarz inequality $\|x+y+z\|^2\leq3(\|x\|^2+\|y\|^2+\|z\|^2)$. For $x'_b, x'_i \in U$, we have
      \begin{eqnarray}
      \label{eq6_2}
      L(x_{b},x_{i};h)&=||F(x_{b})-A_{i}h(x_{i})||^{2} \nonumber\\&=||F(x_{b})-F(x'_{b})+F(x'_{b})-A[h](x'_{i})+A[h](x'_{i})-A[h](x_{r})||^{2} \nonumber\\&\leq3[||F(x_{b})-F(x'_{b})||^{2}+||F(x'_{b})-A[h](x'_{i})||^{2}\nonumber \\&+||A[h](x'_{i})-A[h](x_{i})||]^{2}
      \end{eqnarray}
      By assumption, $\forall x_b\in \Gamma$ and $\forall x_i \in U$, there exists $x'_b \in \mathcal{T}_{b}^{m_b}$ and $x'_i \in \mathcal{T}_{i}^{m_i}$ satisfied $||x_b-x'_b||\leq \epsilon_b$ and $||x_i-x'_i||\leq \epsilon_i$. Then we can get
      \begin{eqnarray}
      \label{eq6_3}
      ||F(x_{b})-F(x'_{b})||^2&=||m(x_b)-m(x'_b) + \int_\Gamma n(x_b)d\Gamma-\int_\Gamma n(x'_b)d\Gamma||^2 \nonumber \\ &\leq  2\int_\Gamma ||n(x_b)-n(x'_b)||^2d\Gamma \nonumber \\ &\leq  2\epsilon_b^{d-1}\epsilon_b^{2a}[n]^2_{a,U}
      \end{eqnarray}
      \begin{eqnarray}
      \label{eq6_4}
      ||A[h](x'_{i})-A[h](x_{i})||]^{2}&=||\int_\Omega z(x_i)d\Omega - \int_\Omega z(x'_i)d\Omega||^2 \nonumber \\&\leq \int_\Omega ||z(x_i)- z(x'_i)||^2 d\Omega \nonumber \\&\leq \epsilon_i^d\epsilon_i^{2a} [z]^2_{a,U}
      \end{eqnarray}
      \begin{eqnarray}
      \label{eq6_5}
      L(x_{b},x_{i};h)&\leq 3L(x'_{b},x'_{i};h) +  2\epsilon_b^{2a+d-1}[n]^2_{a,U} + \epsilon_i^{2a+d} [z]^2_{a,U}
      \end{eqnarray}
      For $x_b^i\in \mathcal{T}_{b}^{m_b}$, $A_{x_b^i}$ is the Voroni cell associated with $x_b^i$, i.e.,
      \begin{eqnarray}
      \label{eq6_6}
      A_{\mathbf{x}_b^i}=\left\{\mathbf{x}\in U|\|\mathbf{x}-\mathbf{x}_b^i\|=\min_{\mathbf{x}^{\prime}\in\mathcal{T}_b^{m_b}}\|\mathbf{x}-\mathbf{x}^{\prime}\|\right\},\nonumber
      \end{eqnarray}
      Similarly, let $x_i^j\in \mathcal{T}_{i}^{m_i}$, we have
      \begin{eqnarray}
      \label{eq6_7}
      A_{\mathbf{x}_i^j}=\left\{\mathbf{x}\in\Gamma|\|\mathbf{x}-\mathbf{x}_i^j\|=\min_{\mathbf{x}^{\prime}\in\mathcal{T}_i^{m_i}}\|\mathbf{x}-\mathbf{x}^{\prime}\|\right\}, \nonumber
      \end{eqnarray}
      Let $\gamma_{b}^{i}=\mu_{b}(A_{x_{b}^{i}})$ and $\gamma_{i}^{j}=\mu_{i}(A_{x_{i}^{j}})$ which satisfy $\sum_{i=1}^{m_b}\gamma_{b}^{i}=1$ and $\sum_{j=1}^{m_i}\gamma_{i}^{j}=1$. Taking the expectation with respect to $(x_b,x_i){\sim}=\mu=\mu_b\times\mu_i$, we have
      \begin{eqnarray}
      \label{eq6_8}
      \mathrm{Loss}(h) &\leq 3\sum_{i=1}^{m_{b}}\sum_{j=1}^{m_{i}}\gamma_{b}^{i}\gamma_{i}^{j}L(x_b^i,x_i^j;h) \nonumber\\ &+ 2\epsilon_b^{2a+d-1}[n]^2_{a,U} + \epsilon_i^{2a+d} [z]^2_{a,U}
      \end{eqnarray}
      With $\gamma_b^{m_b,*}=\max_i\gamma_b^i$ and $\gamma_i^{m_i,*}=\max_j\gamma_i^j$ we can obtain
      \begin{eqnarray}
      \label{eq6_9}
      \mathrm{Loss}(h) &\leq 3 m_b\gamma_b^{m_b,*} m_i \gamma_i^{m_i,*} \frac{1}{m_b}\frac{1}{m_i}\sum_{i=1}^{m_{b}}\sum_{j=1}^{m_{i}}\gamma_{b}^{i}\gamma_{i}^{j}L(x_b^i,x_i^j;h) \nonumber \\ &+  2\epsilon_b^{2a+d-1}[n]^2_{a,U} + \epsilon_i^{2a+d} [z]^2_{a,U}
      \end{eqnarray}
      Note that $m_b\gamma_b^{m_b,*}, m_i \gamma_i^{m_i,*}\geq 1.$ Let $B_\epsilon(\mathbf{x})$ be a closed ball centered at x with radius $\epsilon.$ Let $P_{b}^{* }= \max _{\mathbf{x} \in \Gamma}\mu _{b}\left ( B_{\epsilon _{b}}( \mathbf{x} ) \cap \Gamma\right )$and $P_{i}^{*}=\max_{\mathbf{x}\in  U}\mu_{i}(B_{\epsilon_{i}}(\mathbf{x})\cap U).$ Since for any $\mathbf{x}_b\in \Gamma$ and $\mathbf{x}_{i}\in U$, there exists $\mathbf{x}_b^\prime\in\mathcal{T}_{b}^{m_b}$ and $\mathbf{x}_i^\prime\in\mathcal{T}_{i}^{m_i}$ such that $\|\mathbf{x}_b-\mathbf{x}_b^{\prime}\|\leq\epsilon_b$, and $\|\mathbf{x}_i-\mathbf{x}_i^{\prime}\|\leq\epsilon_i$ for each $i$,there are closed balls $B_{\epsilon_b}$ and $B_{\epsilon_i}$ that include $A_{\mathbf{x}_b^i}$ and $A_{\mathbf{x}_i^i}$, respectively. Thus, we have $\gamma_b^{m_b,*}\leq P_b^*,\gamma_i^{m_i,*}\leq P_i^*.$ Moreover, it follows from Assumption \ref{as_1} that
      \begin{eqnarray}
      \label{eq6_10}
      \gamma_b^{m_b,*}\leq P_{b}^{*}\leq C_{b}\epsilon_{b}^{d-1},\quad\gamma_i^{m_i,*}\leq P_{i}^{*}\leq C_{i}\epsilon_{i}^{d}.
      \end{eqnarray}
      Therefore we can get
      \begin{eqnarray}
      \label{eq6_11}
      \mathrm{Loss}(h) \leq C_1\mathrm{Loss}_m(h) + C_{max}(\epsilon_b^{2a+d-1} +\epsilon_i^{2a+d})
      \end{eqnarray}
      where $C_1=3 C_{b}C_{i}m_b m_i \epsilon_{b}^{d-1} \epsilon_{i}^{d}$ and $C_{max} = max\{ 2[n]^2_{a,U}, 2[z]^2_{a,U}\}$. The proof is completed.
\end{proof}

\begin{lemma}
      \label{lemma_2}
     Let $X$ be a compact subset in $\mathbb{R}^{d}$. Let $\mu$ be a probability measure supported on $X$. Let $\rho$ be the probability density of $\mu$ with respect to s-dimensional Hausdorff measure on $X$ such that $\inf_ {X_\rho}>0$. Suppose that for $\epsilon >0$, there exists a partition of $X$, $\{X_{k}^{\epsilon}\}_{k=1}^{K_{\epsilon}}$ that depends on $\epsilon$ such that for each $X_{k}^{\epsilon}$, $c\epsilon^{s}\leq\mu(X_{k}^{\epsilon})$ where $c>0$ depends only on $(\mu,X)$, and there exists a cube $H_\epsilon(z_k)$ of side length $\epsilon$ centered at some $z_k$ in $X_k$ such that $X_k \subset H_\epsilon(z_k)$. Then, with probability
     at least $1-\sqrt{n}(1-1/\sqrt{n})^{n}$ over iid n sample points $\{x_i\}_{i=1}^n$ from $\mu$, for any $x \in X$, there exists a point $x_j$ such that $\|x-x_{j}\|\leq\sqrt{d}c^{-\frac{1}{s}}n^{-\frac{1}{2s}}$.
\end{lemma}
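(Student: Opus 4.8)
The plan is to run a probabilistic covering argument built directly on the partition $\{X_k^\epsilon\}$ guaranteed by the hypotheses, choosing the scale $\epsilon$ so that the stated high-probability bound emerges exactly. The geometric observation driving everything is that if a query point $x$ and a sample point $x_j$ both lie in the same cube $H_\epsilon(z_k)$ of side length $\epsilon$, then $\|x-x_j\|$ is at most the cube's diagonal $\sqrt{d}\,\epsilon$. Since $X_k^\epsilon \subset H_\epsilon(z_k)$, it suffices to ensure that every partition cell $X_k^\epsilon$ captures at least one of the $n$ iid samples; matching the final radius $\sqrt{d}\,c^{-1/s}n^{-1/(2s)}$ then forces the choice $\epsilon = c^{-1/s} n^{-1/(2s)}$, for which $\sqrt{d}\,\epsilon$ is exactly the target.

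First I would record the two consequences of this choice of $\epsilon$. By the lower mass bound in the hypothesis, each cell satisfies $\mu(X_k^\epsilon) \geq c\epsilon^s = c\cdot c^{-1} n^{-1/2} = 1/\sqrt{n}$. Because $\{X_k^\epsilon\}_{k=1}^{K_\epsilon}$ is a partition of $X$ and $\mu(X)=1$, summing this lower bound gives $1 = \sum_{k} \mu(X_k^\epsilon) \geq K_\epsilon/\sqrt{n}$, hence $K_\epsilon \leq \sqrt{n}$. These two facts --- a uniform floor of $1/\sqrt{n}$ on each cell's mass and at most $\sqrt{n}$ cells --- are the whole engine of the argument.

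Next I would estimate the failure probability. For a fixed cell, the probability that none of the $n$ independent samples lands in it is $(1-\mu(X_k^\epsilon))^n \leq (1-1/\sqrt{n})^n$, using the mass floor. A union bound over the at most $\sqrt{n}$ cells then shows that the probability that some cell is left empty is at most $\sqrt{n}(1-1/\sqrt{n})^n$. Consequently, with probability at least $1-\sqrt{n}(1-1/\sqrt{n})^n$ every cell contains a sample point. On this event, given any $x \in X$, it lies in some cell $X_k^\epsilon \subset H_\epsilon(z_k)$, that cell contains a sample $x_j$, and the diagonal bound gives $\|x-x_j\| \leq \sqrt{d}\,\epsilon = \sqrt{d}\,c^{-1/s} n^{-1/(2s)}$, which is precisely the claim.

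I expect the only real subtlety to be the bookkeeping that ties the scale $\epsilon$, the per-cell mass floor $c\epsilon^s$, and the diagonal length $\sqrt{d}\,\epsilon$ together so that both $K_\epsilon \leq \sqrt{n}$ and the radius come out in the advertised form; the probabilistic core is just the one-line union bound over the emptiness events. The density positivity $\inf_X \rho > 0$ and the regularity class of $X$ enter only indirectly, through the existence of the partition carrying the lower bound $c\epsilon^s$, so no further analytic work on $\rho$ should be required at this step.
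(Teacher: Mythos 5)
Your proof is correct and is exactly the argument this paper relies on: the paper does not reproduce a proof but defers to Appendix~B of the cited reference (Shin et al.), where the same steps appear --- choosing $\epsilon = c^{-1/s}n^{-1/(2s)}$ so that each cell has mass at least $1/\sqrt{n}$, deducing $K_\epsilon \leq \sqrt{n}$ from the partition property, applying a union bound over the empty-cell events, and finishing with the cube-diagonal estimate $\sqrt{d}\,\epsilon$. Nothing is missing; your use of the hypothesis (which provides the partition for \emph{every} $\epsilon>0$, hence for your particular choice of scale) is legitimate, and the bookkeeping matches the stated probability and radius exactly.
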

\begin{proof}
      The proof of this lemma has already been proven in Apppendix B of \cite{Shin2020OnTC}.
\end{proof}
By lemma \ref{lemma_2}, with the probability at least
\begin{eqnarray}
      \label{eq6_12}
      (1-\sqrt{m_b}(1-1/\sqrt{m_b})^{m_b})(1-\sqrt{m_i}(1-1/\sqrt{m_i})^{m_i}),
\end{eqnarray}
$\forall x_b \in \Gamma$ and $\forall x_i \in U$, there exists $\mathbf{x}_b^\prime\in\mathcal{T}_{b}^{m_b}$ and $\mathbf{x}_i^\prime\in\mathcal{T}_{i}^{m_i}$ satisfied $\|\mathbf{x}_{b}-\mathbf{x}_{b}^{\prime}\|\leq\sqrt{d}c_{b}^{-\frac{1}{d-1}}m_{b}^{-\frac{1}{2(d-1)}}$ and $||\mathbf{x}_{i}-\mathbf{x}_{i}^{\prime}\|\leq{\sqrt{d}}c_{i}^{-{\frac{1}{d}}}m_{i}^{-{\frac{1}{2d}}}$. By letting  $\epsilon_b = \sqrt{d}c_{b}^{-\frac{1}{d-1}}m_{b}^{-\frac{1}{2(d-1)}}$ and $ \epsilon_i = {\sqrt{d}}c_{i}^{-{\frac{1}{d}}}m_{i}^{-{\frac{1}{2d}}}$, with the probability at least \Eref{eq6_12}, we have
\begin{eqnarray}
\label{eq6_13}
\mathrm{Loss}(h) \leq C'_1\mathrm{Loss}_m(h) + C'_{max}(m_b^{-\frac{a}{d-1}-0.5} +m_i^{-\frac{a}{d}-0.5})
\end{eqnarray}
where $C'_1=3 \frac{C_{b}C_{i}}{c_b c_i}m_b^{0.5} m_i^{0.5} \sqrt d ^{2d-1}$ and $C_{max} = max\{ 2[n]^2_{a,U}d^{a+0.5d-0.5} c_b^{-\frac{2a+d-1}{d-1}}, \\2[z]^2_{a,U} d^{a+0.5d} c_b^{-\frac{2a+d}{d}}\}$. The proof is completed.

\section*{References}
\bibliographystyle{unsrt}
\bibliography{ref.bib}

\end{document}